\documentclass[12pt,draftcls,onecolumn]{IEEEtran}
\usepackage[english]{babel}
\usepackage[latin1]{inputenc}
\usepackage{enumerate}
\usepackage{color}
\usepackage[T1]{fontenc}
\usepackage{subfigure}
\usepackage{dsfont}
\usepackage[T1]{fontenc}
\usepackage{amsmath}
\usepackage{amsthm}
\usepackage{amstext}
\usepackage{amssymb}
\usepackage{mathrsfs}
\usepackage{cite}
\usepackage{mathtools}
\usepackage{tikz}
\usetikzlibrary{arrows}

\def\rF{\mathbb{F}}
\def\R{\mathbb{R}}

\def\argmin{\mathop{\rm arg\, min}}

\def\B{{\mathcal B}}

\def\E{{\mathcal E}}

\def\C{{\mathcal C}}
\def\P{{\mathcal P}}
\def\Q{{\mathcal Q}}

\def\sq{{\mathsf q}}

\def\sf{{\mathsf f}}
\def\sX{{\mathsf X}}
\def\sH{{\mathsf H}}
\def\sA{{\mathsf A}}
\def\sE{{\mathsf E}}
\def\sM{{\mathsf M}}

\theoremstyle{remark}
\newtheorem{definition}{Definition}[section]
\newtheorem{theorem}{Theorem}[section]

\newtheorem{proposition}{Proposition}
\newtheorem{lemma}{Lemma}
\theoremstyle{remark}
\newtheorem{remark}{Remark}
\newtheorem{example}{Example}

\allowdisplaybreaks

\IEEEoverridecommandlockouts

\begin{document}
\sloppy
\title{Quantized Stationary Control Policies in
  Markov Decision Processes
\thanks{The authors are with the Department of Mathematics and Statistics,
     Queen's University, Kingston, ON, Canada,
     Email: \{nsaldi,linder,yuksel\}@mast.queensu.ca}
\thanks{This research was supported in part by
  the Natural Sciences and
  Engineering Research Council (NSERC) of Canada.}
\thanks{The material in this paper was presented in part at the
51st Annual Allerton Conference on Communication, Control and Computing,
Monticello, Illinois, Oct.\ 2013.}
}
\author{Naci Saldi, Tam\'{a}s Linder, Serdar Y\"uksel}
\maketitle
\begin{abstract}
  For a large class of Markov Decision Processes, stationary (possibly
  randomized) policies are globally optimal. However, in Borel state and action
  spaces, the computation and implementation of even such stationary policies
  are known to be prohibitive. In addition, networked control applications
  require remote controllers to transmit action commands to an actuator with low
  information rate. These two problems motivate the study of approximating
  optimal  policies by quantized (discretized)
  policies. To this end, we introduce deterministic stationary quantizer
  policies and show that such policies can approximate optimal deterministic
  stationary policies with arbitrary precision under mild technical conditions,
  thus demonstrating that one can search for $\varepsilon$-optimal policies
  within the class of quantized control policies.  We also derive explicit
  bounds on the approximation error in terms of the rate of the approximating
  quantizers.  We extend all these approximation results to randomized
  policies.  These findings pave the way toward applications in optimal design of
  networked control systems where controller actions need to be quantized, as
  well as for new computational methods for generating approximately optimal
  decision policies in general (Polish) state and action spaces for both
  discounted cost and average cost.
\end{abstract}

\begin{keywords}
Markov decision processes, stochastic control, approximation, quantization,
stationary policies.
\end{keywords}

\section{Introduction}
\label{sec0}

In the theory of Markov decision processes (MDPs), control policies
induced by measurable mappings from state to the action space are called stationary. For a large class of infinite horizon optimization problems, the set of stationary policies is the smallest structured set of control policies in
which one can find a globally optimal policy. However, computing an optimal policy even in this
class is in general computationally prohibitive for non-finite Polish (that is,
complete and separable metric) state and action spaces. Furthermore, in
applications to networked control, the transmission of such control actions to an
actuator is not realistic when there is an information transmission constraint
(imposed by the presence of a communication channel) between a plant,
a controller, or an actuator.

Hence, it is of interest to study the approximation of optimal stationary policies. Several approaches have been developed in the literature to tackle this problem, most of which assume finite or countable state spaces, see \cite{BeTs96,ReKr02,Ort07,Whi82,Cav86,DuPr12}. In this paper, we study the following question: for infinite Borel state and action spaces, how much is lost in performance if optimal policy is represented with a finite number of bits?
This formulation appears to be new in the networked control literature, where stability properties of quantized control actions have been studied extensively, but the optimization of quantized control actions has not been studied as much in the context of cost minimization.

This paper contains two main contributions: (i) We establish conditions under which quantized control policies are asymptotically optimal; that is, as the accuracy of quantization increases, the optimal cost is achieved as the limit of the cost of quantized policies. (ii) We establish rates of convergence  under further conditions; that is, we obtain bounds on the approximation loss due to quantization. These findings are somewhat analogous to results in optimal quantization theory \cite{GrNe98}.

The rest of the paper is organized as follows. In Section \ref{sec1} we review
the definition of discrete time Markov decision processes (MDP) in the setting
we will be dealing with. In Section \ref{sub1sec2} we consider the approximation
problem for the total and discounted cost cases  using strategic measures (that is,
measures on the sequence space of states and control actions). In Section \ref{sub2sec2}
a similar approximation result is
obtained for the average cost case using ergodic invariant probability measures of
the induced Markov chains. In Section \ref{sec3} we derive quantitative bounds
on the approximation error in terms of the rate of the approximating quantizers
for both discounted and average costs. In Section \ref{sec4} we
extend the results of Sections \ref{sec2} and \ref{sec3} to approximating
randomized stationary policies  by randomized stationary
quantizer policies. Finally, in Section~\ref{sec5} we discuss future research directions.

\section{Markov Decision Processes}\label{sec1}

For a metric space $\sE$, let $\B(\sE)$ denote its Borel $\sigma$-algebra. Unless otherwise specified, the term "measurable" will refer to Borel measurability. We denote by $\P(\sE)$ the set of all probability measures on $\sE$.

Consider a discrete time Markov decision process (MDP) with \emph{state space} $\sX$ and \emph{action space} $\sA$, where $\sX$ and $\sA$ are complete, separable metric (Polish) spaces equipped with their Borel $\sigma$-algebras $\B(\sX)$ and $\B(\sA)$, respectively. For all $x \in \sX$, we assume that the \emph{set of admissible actions} is $\sA$. Let the \emph{stochastic kernel} $p(\,\cdot\,|x,a)$ denote the \emph{transition probability} of the next state given that previous state-action pair is $(x,a)$ \cite{HeLa96}. The probability measure $\mu$ over $\sX$ denotes the initial distribution.

Define the history spaces
$\sH_{n}=(\sX\times\sA)^{n}\times\sX$, $n=0,1,2,\ldots$ endowed with their
product Borel $\sigma$-algebras generated by $\B(\sX)$ and $\B(\sA)$. A
\emph{policy} is a sequence $\pi=\{\pi_{n}\}_{n\geq0}$ of stochastic kernels
on $\sA$ given $\sH_{n}$. A policy $\pi$ is said to be \emph{deterministic} if the stochastic kernels $\pi_n$
are realized by a sequence of measurable functions $\{f_{n}\}$ from $\sH_{n}$ to $\sA$, i.e.,
$\pi_{n}(\,\cdot\,|h_{n})=\delta_{f_{n}(h_{n})}(\,\cdot\,)$ where
$f_{n}:\sH_{n}\rightarrow\sA$ is measurable. A policy $\pi$ is called \emph{stationary} if the stochastic kernels $\pi_{n}$  depend only on the current state; that is,
$\pi_{n}=\pi_{m}$ $(m,n\geq0)$ and $\pi_n$ is a stochastic kernel on $\sA$ given $\sX$. A policy $\pi$ that is both
deterministic and stationary is called \emph{deterministic stationary}. Hence, deterministic stationary policies are defined
by a measurable function $f:\sX\to \sA$. We denote by $S$ the set of deterministic stationary policies.

According to the Ionescu Tulcea theorem \cite{HeLa96}, an initial distribution $\mu$ on $\sX$ and a policy $\pi$ define a unique probability measure $P_{\mu}^{\pi}$ on $\sH_{\infty}=(\sX\times\sA)^{\infty}$, which is called a \emph{strategic measure} \cite{Fei96}. Thus $P_{\mu}^{\pi}$ is symbolically given by
\begin{align}
P_{\mu}^{\pi}(dx_0 da_0 dx_1 da_1 \ldots)\coloneqq \prod_{n=0}^{\infty} p(dx_n|x_{n-1},a_{n-1})\pi(da_n|h_n),\nonumber
\end{align}
where $h_n=(x_0,a_0,\ldots,x_{n-1},a_{n-1},x_n)$ and $p(dx_0|x_{-1},a_{-1})=\mu(dx_0)$. The expectation with respect to $P_{\mu}^{\pi}$ is denoted by $E_{\mu}^{\pi}$. If $\mu=\delta_x$ for some $x\in\sX$, we write $P_{x}^{\pi}$ and $E_x^{\pi}$ instead of $P_{\delta_x}^{\pi}$ and $E_{\delta_x}^{\pi}$, respectively. Hence, given any policy $\pi$ and an initial distribution $\mu$, $\{x_n,a_n\}_{n\geq1}$ is a $\sX \times \sA$-valued stochastic process defined on a probability space $\bigl(\sH_{\infty},\B(\sH_{\infty}),P_{\mu}^{\pi}\bigr)$ satisfying $P_{\mu}^{\pi}(x_0 \in \,\cdot\,) = \mu(\,\cdot\,)$,
$P_{\mu}^{\pi}(x_n \in \,\cdot\, | h_{n-1}, a_{n-1}) = P_{\mu}^{\pi}(x_n \in \,\cdot\, | x_{n-1},a_{n-1}) = p(\,\cdot\, | x_{n-1},a_{n-1})$, and $P_{\mu}^{\pi}(a_n \in \,\cdot\, | h_n) = \pi_n(\,\cdot\, | h_n)$, for all $n$.

Let $c$ and $c_{n}$, $n=0,1,2,\ldots$, be measurable functions from $\sX\times\sA$ to $[0,\infty)$. The cost functions $w$ considered in this paper are the following.
\begin{itemize}
\item[i)]Expected Total Cost: $w_t(\pi,\mu) \coloneqq E_{\mu}^{\pi}\bigl[\sum_{n=0}^{\infty}c_{n}(x_{n},a_{n})\bigr]$.
\item[ii)]Expected Discounted Cost: $w_{\beta}(\pi,\mu) \coloneqq E_{\mu}^{\pi}\bigl[\sum_{n=0}^{\infty}\beta^{n}c(x_{n},a_{n})\bigr]$ for some $\beta\in(0,1)$.
\item[iii)]Expected Average Cost: $w_{A}(\pi,\mu) \coloneqq \limsup_{N\rightarrow\infty}\frac{1}{N}E_{\mu}^{\pi}\bigl[\sum_{n=0}^{N}c(x_{n},a_{n})\bigr]$.
\end{itemize}
Note that the expected discounted cost is a special case of the expected total cost. Define $L_{\Delta,\mu}\coloneqq\{P_{\mu}^{\pi}: \pi\in\Delta\}$. Then $L_{\Delta,\mu}$ is the set of all strategic measures with the initial distribution $\mu$. Hence, the cost function $w$ can be viewed as a function from $L_{\Delta,\mu}$ to $[0,\infty]$.

We write $w(\pi,\mu)$ to denote the cost function (either i), ii), or iii)) of the policy $\pi$ for the initial distribution $\mu$. If $\mu = \delta_x$, we write $w(\pi,x)$ instead of $w(\pi,\delta_x)$. A policy $\pi^{*}$ is called optimal if $w(\pi^{*},\mu) = \inf_{\pi \in \Delta} w(\pi,\mu)$ for all $\mu \in \P(\sX)$. It is well known that the set of deterministic stationary policies is optimal for a large class of infinite horizon discounted cost problems (see, e.g., \cite{HeLa96,FeKaZa12}) and average cost optimal control problems (see, e.g., \cite{Bor02,FeKaZa12}). For instance, Feinberg  \emph{et al.\ }\cite{FeKaZa12} (see also \cite{FeKaZa13}) recently showed the existence of an optimal stationary policy for discounted cost under weak continuity of the transition probability and $\mathbb{K}$-inf-compactness of the one-stage cost function, and for average cost with an additional mild assumption.

Throughout the paper, the initial distribution $\mu$ is assumed to be an arbitrary fixed distribution unless otherwise is specified.

\subsection{Notation and Conventions}\label{sub0sec1}

The set of all bounded measurable real functions and bounded continuous real functions on a metric space $\sE$  are denoted by $B(\sE)$ and $C_{b}(\sE)$, respectively.  For any $\nu \in \P(\sE)$ and measurable real function $g$ on $\sE$, define $\nu(g) \coloneqq \int g d\nu$.
Let $\sE_n=\prod_{i=1}^{n} \sE_i$ $(2\leq n\leq \infty)$ be finite or a infinite
product space. By an abuse of notation, any function $g$ on $\prod_{j=i_1}^{i_m}
\sE_j$, where $\{i_1,\ldots,i_m\}\subseteq\{1,\ldots,n\}$ ($m\leq n$), is also treated as a
function on $\sE_n$ by identifying it with its natural extension to $\sE_n$.
For any $\pi$ and initial distribution $\mu$, let $\lambda^{\pi,\mu}_n$, $\lambda^{\pi,\mu}_{(n)}$ and $\gamma^{\pi,\mu}_n$, respectively, denote the law of $x_n$, $(x_0,\ldots,x_n)$ and $(x_n,a_n)$ for all $n\geq0$. Hence, for instance, we may write $\lambda_{(n+1)}^{\pi,\mu}(h) = \lambda_{(n)}^{\pi,\mu}\bigl(\lambda_{(1)}^{\pi,x_n}(h)\bigr)$ where $h \in B(\sX^{n+2})$.
Let $\rF$ denote the set of all measurable functions from $\sX$ to $\sA$. For any $g \in B(\sH_n)$ ($n\geq1$) and $f \in \rF$, define $g_{f}(x_0,\ldots,x_n) \coloneqq g(x_0,f(x_0),\ldots,f(x_{n-1}),x_n)$.
Hence, when $c \in B(\sX \times \sA)$, $c_f(x_n) = c(x_n,f(x_n))$ since $c \in B(\sH_{n+1})$ by our conventions.

\subsection{Problem Formulation}\label{sub1sec1}

In this section we give a formal definition of the problems considered in this paper. To this end, we first give the definition of a quantizer.

\begin{definition}\label{def1}
A measurable function $q: \sX\rightarrow\sA$ is called a \emph{quantizer} from
$\sX$ to $\sA$ if the range of $q$, i.e., $q(\sX)=\{q(x)\in\sA:x\in\sX\}$, is
finite.
\end{definition}

The elements of $q(\sX)$ (i.e., the possible values of $q$) are called
the \emph{levels} of $q$.
 The rate $R$ of a quantizer $q$ is defined as the
logarithm of the number of its levels: $R=\log_2|q(\sX)|$.
Note that $R$
(approximately) represents the number of bits needed to losslessly encode the
output levels of $q$ using binary codewords of equal length.
Let $\Q$ denote the set
of all quantizers from $\sX$ to $\sA$. In this paper we introduce a new type of
policy called a \emph{deterministic stationary quantizer policy}.  Such a policy
is a constant sequence $\pi=\{\pi_{n}\}$ of stochastic kernels on $\sA$ given
$\sX$ such that $\pi_{n}(\,\cdot\,|x)=\delta_{q(x)}(\,\cdot\,)$ for all $n$ for
some $q\in\Q$. For any finite set $\Lambda \subset \sA$, let $\Q(\Lambda)$ denote the set
of all quantizers having range $\Lambda$ and let $S\Q(\Lambda)$ denote the set of all deterministic stationary
quantizer policies induced by $\Q(\Lambda)$.

The principal goal in this paper is to determine conditions on the spaces $\sX$ and $\sA$, initial distribution $\mu$, the stochastic kernel
$p$, and the one-stage cost functions $c$, $c_n$ ($n\geq0$) such that there exists a sequence of finite subsets $\{\Lambda\}_{k\geq1}$ of $\sA$ for which the following statements hold:
\begin{itemize}
\item [\textbf{(P1)}] For any $\pi \in S$ there exists an approximating sequence $\{\pi^k\}$ satisfying $\lim_{k\rightarrow\infty} w(\pi^{k},\mu) = w(\pi,\mu)$, where $\pi^{k}\in S\Q(\Lambda_k)$ ($k\geq1$).
\item [\textbf{(P2)}] For any $\pi \in S$ the approximating sequence $\{\pi^{k}\}$ in \textbf{(P1)} is such that $|w(\pi,\mu)-w(\pi^{k},\mu)|$ can be explicitly upper bounded by a term depending on the cardinality of $\Lambda_k$.
\end{itemize}

\noindent Thus \textbf{(P1)} implies the existence of a sequence of stationary quantizer policies converging to an optimal stationary policy, while \textbf{(P2)} implies that the approximation error can be explicitly controlled.

\section{Approximation of deterministic stationary policies}
\label{sec2}

A sequence $\{\mu_n\}$ of measures on a  measurable space $(\sE,\E)$ is said to
converge setwise \cite{HeLa03} to a measure $\mu$ if $\mu_n(B)\rightarrow\mu(B)
\text{ for all } B\in\E$, or equivalently, $\mu_n(g) \rightarrow \mu(g)$
for all $g\in B(\sE)$. In this section, we will impose the following
assumptions:
\begin{itemize}
\item [(a)] The stochastic kernel $p(\,\cdot\,|x,a)$ is setwise continuous in $a\in\sA$, i.e., if $a_{n}\rightarrow a$, then $p(\,\cdot\,|x,a_{n})\rightarrow p(\,\cdot\,|x,a)$ setwise for all $x\in\sX$.
\item [(b)] $\sA$ is compact.
\end{itemize}

\begin{remark}\label{remark0}

 Note that if $\sX$ is countable, then $B(\sX) = C_b(\sX)$ which implies the equivalence of setwise convergence and weak convergence. Hence, results developed in this paper are applicable to the MDPs having weakly continuous, in the action variable, transition probabilities when the state space is countable.

\end{remark}

We now define the $ws^{\infty}$ topology on $\P(\sH_{\infty})$ which was first
introduced by Sch\"{a}l in \cite{Sch75}. Let $\C(\sH_{0})=B(\sX)$ and let
$\C(\sH_{n})$ ($n\geq1$) be the set of real valued functions $g$ on $\sH_{n}$
such that $g\in B(\sH_{n})$ and
$g(x_{0},\,\cdot\,,x_{1},\,\cdot\,,\ldots,x_{n-1},\,\cdot\,,x_{n})\in
C_{b}(\sA^{n})$ for all $(x_0,\ldots,x_n)\in \sX^{n+1}$. The $ws^{\infty}$
topology on $\P(\sH_{\infty})$ is defined as the smallest topology which renders
all mappings $P\mapsto P(g)$,
$g\in\bigcup_{n=0}^{\infty}\C(\sH_{n})$, continuous. Similarly, the weak
topology on $\P(\sH_{\infty})$ can also be defined as the smallest topology
which makes all  mappings $P\mapsto P(g)$,
$g\in\bigcup_{n=0}^{\infty}C_b(\sH_{n})$, continuous \cite[Lemma 4.1]{Sch75}. A
theorem due to  Balder \cite[page 149]{Bal89} and  Nowak \cite{Now88}
states that the weak topology and the $ws^{\infty}$ topology on $L_{\Delta,\mu}$ are
equivalent under the assumptions (a) and (b). Hence, the $ws^{\infty}$ topology is metrizable with the Prokhorov
metric on $L_{\Delta,\mu}$.

The following theorem is a Corollary of \cite[Theorem 2.4]{Ser82} which will be used in this paper frequently.
It is a generalization of the dominated convergence theorem.

\begin{theorem}
  Let $(\sE,\E)$ be a measurable space and let $\nu$, $\nu_{n}$ $(n\geq1)$ be
  measures with the same finite total mass. Suppose $\nu_{n}\rightarrow\nu$
  setwise, $\lim_{n\rightarrow\infty}h_{n}(x)=h(x)$ for all $x\in\sX$, and $h$,
  $h_{n}$ ($n\geq1$) are uniformly bounded. Then, $\lim_{n\rightarrow\infty} \nu_{n}(h_n)=\nu(h)$.
\label{cor1}
\end{theorem}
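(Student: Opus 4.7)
The plan is to reduce the statement to the standard dominated convergence theorem for a single measure, using setwise convergence to pass from the sequence $\{\nu_n\}$ to the limit $\nu$.

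First, I would split the error using the triangle inequality:
\begin{align}
|\nu_n(h_n)-\nu(h)| \leq \nu_n(|h_n-h|) + |\nu_n(h)-\nu(h)|. \nonumber
\end{align}
The second term tends to $0$ directly by setwise convergence, since $h$ is bounded and measurable (and setwise convergence is equivalent to $\nu_n(g)\to\nu(g)$ for all $g\in B(\sE)$). So the task reduces to showing that $\nu_n(|h_n-h|)\to 0$.

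For this, let $M$ be a uniform bound so that $|h_n|,|h|\leq M$, and define the auxiliary sequence
\begin{align}
g_m(x) \coloneqq \sup_{k\geq m} |h_k(x) - h(x)|, \quad m\geq 1. \nonumber
\end{align}
Each $g_m$ is measurable (as a countable supremum of measurable functions), uniformly bounded by $2M$, and the sequence $\{g_m\}$ is pointwise nonincreasing with $g_m(x)\downarrow 0$ for every $x$ by the hypothesis $h_n\to h$ pointwise. The key monotonicity observation is that for any fixed $m$ and any $n\geq m$,
\begin{align}
\nu_n(|h_n-h|) \leq \nu_n(g_m). \nonumber
\end{align}
Since $g_m\in B(\sE)$, setwise convergence yields $\lim_{n\to\infty}\nu_n(g_m) = \nu(g_m)$, and therefore $\limsup_{n\to\infty}\nu_n(|h_n-h|)\leq \nu(g_m)$ for every $m$.

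Finally, I would let $m\to\infty$: because $\nu$ has finite total mass and $g_m\downarrow 0$ with $g_1\leq 2M$, the dominated (equivalently monotone) convergence theorem for the single finite measure $\nu$ gives $\nu(g_m)\to 0$. Hence $\limsup_{n\to\infty}\nu_n(|h_n-h|) = 0$, which combined with the first step completes the proof. The only mildly delicate point is the monotonicity step that allows the order of limits to be interchanged; once $g_m$ is introduced as an upper envelope this essentially writes itself, and the finite total mass assumption is precisely what is needed to apply monotone convergence to $\nu(g_m)$.
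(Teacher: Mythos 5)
Your proof is correct. Note that the paper does not actually prove this statement: it presents it as a corollary of Serfozo's Theorem 2.4 in \cite{Ser82} and gives no argument, so there is no internal proof to match step for step. Your self-contained argument is the standard ``upper envelope'' route to such generalized dominated convergence results: the decomposition $|\nu_n(h_n)-\nu(h)|\leq \nu_n(|h_n-h|)+|\nu_n(h)-\nu(h)|$ is the right one, the second term is handled by the paper's stated equivalence of setwise convergence with $\nu_n(g)\to\nu(g)$ for all $g\in B(\sE)$ (which is legitimate here because all the measures share the same finite total mass, so the simple-function approximation is uniform), and the envelope $g_m=\sup_{k\geq m}|h_k-h|$ correctly converts the diagonal limit into an iterated one: for $n\geq m$ you get $\limsup_n\nu_n(|h_n-h|)\leq\nu(g_m)$, and monotone convergence for the single finite measure $\nu$ sends $\nu(g_m)\downarrow 0$. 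All measurability and boundedness claims ($g_m$ as a countable supremum, $h$ as a pointwise limit, $g_m\leq 2M$) check out, and nonnegativity of the measures is used where needed. What your approach buys over the paper's citation is transparency: the reader sees exactly where the finite common total mass and the uniform bound enter, at the cost of reproving a special case of a known result rather than invoking it.
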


Let $d_{\sA}$ denote the metric on $\sA$. Since the action space $\sA$ is compact and thus totally bounded, one can find a sequence of finite sets $\bigl(\{a_i\}_{i=1}^{m_k}\bigr)_{k\geq1}$ such that for all $k$,
\begin{align}
\min_{i\in\{1,\ldots,m_k\}} d_{\sA}(a,a_i) < 1/k \text{ for all } a \in \sA. \label{eq22}
\end{align}
In other words, $\{a_i\}_{i=1}^{m_k}$ is an $1/k$-net in $\sA$. Let $\Lambda_k \coloneqq \{a_1,\ldots,a_{m_k}\}$ and for any $f \in \rF$ define the sequence $\{q_{k}\}$ by letting
\begin{align}
q_k(x) \coloneqq \argmin_{a \in \Lambda_k} d_{\sA}(f(x),a), \label{eq23}
\end{align}
where ties are broken so that $q_k$ are measurable. Note that, $q_k \in \Q(\Lambda_k)$ for all $k$ and $q_{k}$ converges \emph{uniformly} to $f$ as $k\to \infty$. Let $\pi \in S$ and $\pi^k \in S\Q(\Lambda_k)$ be induced by $f$ and $q_k$, respectively. We call each $\pi_k$ a \emph{quantized approximation} of $\pi$. In the rest of this paper, we assume that the sequence $\{\Lambda_k\}$, as defined above, is fixed.

\begin{remark} \label{remark2}
Since $\sA$ is separable, there exists a totally bounded metric $\tilde{d}_{\sA}$ on $\sA$ that is compatible with the original metric structure of $\sA$ \cite[Corollary 3.41]{ChBo06}. Hence, compact action space $\sA$ is indeed not necessary for the problem \textbf{(P1)}. However, it is usually necessary to show the existence of an optimal deterministic stationary policy.
\end{remark}


\subsection{Expected Total and Discounted Costs}
\label{sub1sec2}

Here we consider the first approximation problem $\textbf{(P1)}$ for
the expected total cost criterion and its special case, the
expected discounted cost criterion (see Section~\ref{sec1}). Recall that $w_t$ and $w_{\beta}$ denote the expected total and discounted costs, respectively. We impose the following assumptions in addition to assumptions (a) and (b):
\begin{itemize}
\item [(c)] $c$ and $c_{n}$ ($n\geq1$) are non-negative, bounded measurable functions satisfying $c(x,\,\cdot\,)$, $c_{n}(x,\,\cdot\,)\in C_{b}(\sA)$ for all $x\in\sX$.
\item [(d)] $\sup_{\tilde{\pi}\in S}\sum_{n=N+1}^{\infty} \gamma^{\tilde{\pi},\mu}_n(c_n) \rightarrow 0$ as $N\rightarrow\infty$.
\end{itemize}

\begin{remark}\label{remark3}

We note that all the results in this paper remain valid if it is only assumed that $c$ and $c_n$ ($n\geq0$) are bounded and measurable.
\end{remark}

Since the one-stage cost functions $c_{n}$ are non-negative, assumption (d) is
equivalent to Condition (C) in \cite[pg. 349]{Sch75}. Clearly, the expected
discounted cost satisfies assumption (d) under assumption (c). We now state our main theorem in this subsection.

\begin{theorem}
Suppose assumptions (a), (b), (c) hold. Let $\pi \in S$ and $\{\pi^k\}$ be the quantized approximations of $\pi$. Then, $w_{\beta}(\pi^k,\mu) \rightarrow w_{\beta}(\pi,\mu)$ as $k\rightarrow\infty$. The same statement is true for $w_t$ if we further impose assumption (d).
\label{thm3}
\end{theorem}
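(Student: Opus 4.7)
The plan is to reduce convergence of the costs to termwise convergence via the one-stage marginals. Writing
\[
w_{\beta}(\pi,\mu) = \sum_{n=0}^{\infty}\beta^{n}\gamma_{n}^{\pi,\mu}(c)
\quad\text{and}\quad
w_{t}(\pi,\mu) = \sum_{n=0}^{\infty}\gamma_{n}^{\pi,\mu}(c_{n}),
\]
it suffices to show $\gamma_{n}^{\pi^{k},\mu}(c)\to\gamma_{n}^{\pi,\mu}(c)$ (and analogously with $c_{n}$) for every fixed $n$, and then to pass the limit through the infinite sum using either the geometric dominator $\beta^{n}$ or assumption (d).

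The main step is an induction on $n$ showing that the state marginals $\lambda_{n}^{\pi^{k},\mu}$ converge setwise to $\lambda_{n}^{\pi,\mu}$. The base $n=0$ is immediate since both equal $\mu$. For the inductive step, for any $B\in\B(\sX)$,
\[
\lambda_{n+1}^{\pi^{k},\mu}(B) \;=\; \int p\bigl(B\,\big|\,x,q_{k}(x)\bigr)\,\lambda_{n}^{\pi^{k},\mu}(dx),
\]
and similarly for $\pi$ with $f$ replacing $q_{k}$. By construction $q_{k}(x)\to f(x)$ uniformly in $x$, so assumption (a) makes the integrand $p(B\,|\,x,q_{k}(x))$ converge pointwise in $x$ to $p(B\,|\,x,f(x))$, and it is uniformly bounded by $1$. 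Combined with the inductive hypothesis and the fact that all the $\lambda_{n}^{\pi^{k},\mu}$ are probability measures (hence share a common finite total mass), Theorem \ref{cor1} yields $\lambda_{n+1}^{\pi^{k},\mu}(B)\to \lambda_{n+1}^{\pi,\mu}(B)$.

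Given this setwise convergence of the state marginals, the same theorem delivers convergence of the cost marginals. Writing $\gamma_{n}^{\pi^{k},\mu}(c)=\int c(x,q_{k}(x))\,\lambda_{n}^{\pi^{k},\mu}(dx)$ and using assumption (c), the function $c(x,\cdot)\in C_{b}(\sA)$ together with $q_{k}(x)\to f(x)$ gives $c(x,q_{k}(x))\to c(x,f(x))$ pointwise in $x$ with uniform bound $\|c\|_{\infty}$, so Theorem \ref{cor1} applies. The identical argument with $c_{n}$ in place of $c$ gives $\gamma_{n}^{\pi^{k},\mu}(c_{n})\to \gamma_{n}^{\pi,\mu}(c_{n})$ for each $n$.

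It remains to justify interchanging the limit and the infinite sum. For $w_{\beta}$, the estimate $\beta^{n}\gamma_{n}^{\pi^{k},\mu}(c)\leq\beta^{n}\|c\|_{\infty}$ is summable, so dominated convergence over $n$ yields $w_{\beta}(\pi^{k},\mu)\to w_{\beta}(\pi,\mu)$. For $w_{t}$, given $\varepsilon>0$ assumption (d) supplies an $N$ with $\sup_{\tilde\pi\in S}\sum_{n>N}\gamma_{n}^{\tilde\pi,\mu}(c_{n})<\varepsilon$; applying this to both $\pi$ and the (stationary) $\pi^{k}$ controls both tails uniformly in $k$, while the finite partial sum up to $N$ converges by the previous step, giving $\limsup_{k}|w_{t}(\pi^{k},\mu)-w_{t}(\pi,\mu)|\leq 2\varepsilon$. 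The only real obstacle is the inductive setwise convergence step: one must verify that Theorem \ref{cor1} is genuinely applicable with the \emph{varying} integrand $p(B\,|\,x,q_{k}(x))$, which is exactly why setwise (rather than merely weak) continuity of $p$ in the action variable is imposed in (a).
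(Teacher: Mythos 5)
Your proof is correct and follows essentially the same route as the paper: the paper's Proposition~\ref{prop1} establishes $ws^{\infty}$ convergence of the strategic measures via exactly your induction (base case from assumption (a), inductive step via the generalized dominated convergence Theorem~\ref{cor1} applied to the varying integrand), and the proof of Theorem~\ref{thm3} then splits the sum and controls the tail with assumption (d) just as you do. You merely inline the part of Proposition~\ref{prop1} that is actually needed, working with the single-time state marginals $\lambda_{n}^{\pi^{k},\mu}$ instead of the joint laws and the $ws^{\infty}$ topology, which is a harmless streamlining rather than a different argument.
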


The proof of Theorem~\ref{thm3} requires the following proposition which is proved in Appendix~\ref{app1}.

\begin{proposition}
Suppose assumptions (a) and (b) hold. Then for any $\pi \in S$, the strategic measures $\{P_{\mu}^{\pi^k}\}$ induced by the quantized approximations $\{\pi^k\}$ of $\pi$ converge to the strategic measure $P_{\mu}^{\pi}$ of $\pi$ in the $ws^{\infty}$ topology. Hence, $\gamma^{\pi^k,\mu}_n(c_n) \rightarrow \gamma^{\pi,\mu}_n(c_n)$ as $k\rightarrow\infty$ under assumption (c).
\label{prop1}
\end{proposition}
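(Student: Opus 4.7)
The plan is to verify $ws^{\infty}$ convergence by checking the defining condition: $P_{\mu}^{\pi^k}(g) \to P_{\mu}^{\pi}(g)$ for every $g \in \C(\sH_n)$ and every $n \geq 0$. Since $\pi$ and $\pi^k$ are deterministic stationary, induced by $f$ and $q_k$ respectively, such an integral collapses to
\begin{align*}
P_{\mu}^{\pi}(g) = \lambda_{(n)}^{\pi,\mu}(g_{f}), \qquad P_{\mu}^{\pi^k}(g) = \lambda_{(n)}^{\pi^k,\mu}(g_{q_k}),
\end{align*}
using the notation $g_{f}(x_0,\ldots,x_n) = g(x_0,f(x_0),\ldots,x_{n-1},f(x_{n-1}),x_n)$ introduced in Section~\ref{sub0sec1}. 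The key reduction is therefore to establish that the state-path laws $\lambda_{(n)}^{\pi^k,\mu}$ converge setwise to $\lambda_{(n)}^{\pi,\mu}$ and then to combine this with the pointwise convergence of the integrands $g_{q_k} \to g_f$ via Theorem~\ref{cor1}.

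First I would prove setwise convergence $\lambda_{(n)}^{\pi^k,\mu} \to \lambda_{(n)}^{\pi,\mu}$ by induction on $n$. The base case $n=0$ is immediate, as both laws equal $\mu$. For the inductive step, fix $h \in B(\sX^{n+2})$ and set
\begin{align*}
\phi_k(x_0,\ldots,x_n) \coloneqq \int h(x_0,\ldots,x_n,x_{n+1})\, p(dx_{n+1}|x_n,q_k(x_n)),
\end{align*}
with $\phi$ defined analogously using $f$. Since $q_k \to f$ uniformly on $\sX$, assumption (a) (setwise continuity of $p$ in $a$) gives $p(\,\cdot\,|x_n,q_k(x_n)) \to p(\,\cdot\,|x_n,f(x_n))$ setwise for each $x_n$, hence $\phi_k \to \phi$ pointwise on $\sX^{n+1}$. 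The sequence $\{\phi_k\}$ is uniformly bounded by $\|h\|_\infty$. Applying Theorem~\ref{cor1} to the probability measures $\lambda_{(n)}^{\pi^k,\mu} \to \lambda_{(n)}^{\pi,\mu}$ (from the inductive hypothesis) and integrands $\phi_k \to \phi$ yields $\lambda_{(n+1)}^{\pi^k,\mu}(h) \to \lambda_{(n+1)}^{\pi,\mu}(h)$, closing the induction.

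Once setwise convergence of the state-path laws is in hand, the $ws^{\infty}$ statement follows by one more application of Theorem~\ref{cor1}: for $g \in \C(\sH_n)$ the function $g(x_0,\,\cdot\,,\ldots,x_{n-1},\,\cdot\,,x_n)$ is continuous on $\sA^n$, so uniform convergence $q_k \to f$ gives $g_{q_k} \to g_f$ pointwise on $\sX^{n+1}$, with common bound $\|g\|_\infty$. Combined with setwise convergence of $\lambda_{(n)}^{\pi^k,\mu}$, this gives $\lambda_{(n)}^{\pi^k,\mu}(g_{q_k}) \to \lambda_{(n)}^{\pi,\mu}(g_f)$, i.e., $P_{\mu}^{\pi^k}(g) \to P_{\mu}^{\pi}(g)$. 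The final claim about $\gamma_n^{\pi^k,\mu}(c_n)$ is then immediate: under assumption (c) the natural extension of $c_n$ to $\sH_{n+1}$ lies in $\C(\sH_{n+1})$, and $P_{\mu}^{\pi^k}(c_n) = \gamma_n^{\pi^k,\mu}(c_n)$.

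The main subtlety is that weak continuity of $p$ in $a$ would be insufficient for the inductive step: one needs $\phi_k \to \phi$ pointwise for arbitrary $h \in B(\sX^{n+2})$, not merely continuous ones, which is exactly why assumption (a) is stated in terms of setwise (rather than weak) continuity. The remaining machinery is essentially an organized application of Theorem~\ref{cor1} and the fact that $q_k \to f$ uniformly on all of $\sX$ rather than just on a large set.
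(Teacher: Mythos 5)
Your proposal is correct and follows essentially the same route as the paper's own proof: reduce $P_{\mu}^{\pi^k}(g)\to P_{\mu}^{\pi}(g)$ to setwise convergence of the state-path laws $\lambda_{(n)}^{\pi^k,\mu}\to\lambda_{(n)}^{\pi,\mu}$, establish the latter by induction using assumption (a) and Theorem~\ref{cor1} (your $\phi_k$ is exactly the paper's $\lambda_{(1)}^{\pi^k,x_n}(h)$), and then apply Theorem~\ref{cor1} once more with $g_{q_k}\to g_f$. The only cosmetic difference is that you start the induction at $n=0$ rather than $n=1$.
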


\begin{proof}[Proof of Theorem~\ref{thm3}]
Since $w_{\beta}$ is a special case of $w_t$ and satisfies (d) under assumption (c), it is enough to prove the theorem for $w_t$. By Proposition~\ref{prop1},  $\gamma^{\pi^k,\mu}_n(c_n) \rightarrow \gamma^{\pi,\mu}_n(c_n)$ as $k\rightarrow\infty$ for all $n$. Then, we have
\begin{align}
\limsup_{k\rightarrow\infty}|w_t(\pi^k,\mu) - w_t(\pi,\mu)|
&\leq\limsup_{k\rightarrow\infty}\sum_{n=0}^{\infty} |\gamma^{\pi^k,\mu}_n(c_n) - \gamma^{\pi,\mu}_n(c_n)| \nonumber \\
&\leq \lim_{k\rightarrow\infty}\sum_{n=0}^{N} |\gamma^{\pi^k,\mu}_n(c_n) - \gamma^{\pi,\mu}_n(c_n)|
+2\sup_{\tilde{\pi}\in S}\sum_{n=N+1}^{\infty}\gamma^{\tilde{\pi},\mu}_n(c_n) \nonumber\\
&=2\sup_{\tilde{\pi}\in S}\sum_{n=N+1}^{\infty} \gamma^{\tilde{\pi},\mu}_n(c_n).\nonumber
\end{align}
Since the last expression converges to zero as $N\rightarrow\infty$ by assumption (d), the proof is complete.
\end{proof}

\begin{remark}\label{remark1}
Notice that this proof implicitly shows that $w_t$ and $w_{\beta}$ are sequentially continuous with respect to the strategic measures in the $ws^{\infty}$ topology.
\end{remark}

The following is a generic example frequently considered in the theory of Markov decision processes (see \cite[p. 496]{HeLa94}, \cite{HeRo01}, \cite[p. 23]{HeLa99}).

\begin{example}
\label{exm1}
Let us consider an additive-noise system given by
\begin{align}
x_{n+1}=F(x_{n},a_{n})+v_{n}, \text{ } n=0,1,2,\ldots \nonumber
\end{align}
where $\sX=\R^{n}$ and the $v_{n}$'s are independent and identically distributed (i.i.d.) random vectors whose common
distribution has a continuous, bounded, and
strictly positive probability density function. A
non-degenerate Gaussian distribution satisfies this condition. We assume that
the action space $\sA$ is a compact subset of $\R^{d}$ for some $d\geq1$, the one stage cost
functions $c$ and $c_n$ $(n\geq1)$ satisfy assumption (c),  and $F(x,\,\cdot\,)$
is continuous for all $x\in\sX$. It is straightforward to show that
assumption (a) holds under these conditions. Hence, under assumption (d) on the
cost functions $c_n$, Theorem
\ref{thm3} holds for this system.
\end{example}

\subsection{Expected Average Cost}
\label{sub2sec2}

In this section we consider the first approximation problem $\textbf{(P1)}$ for
the expected average cost criterion (see Section~\ref{sec1}). We are still assuming (a), (b), and (c). In contrast to
the expected total and discounted cost criteria, the expected average cost is
in general not sequentially continuous with respect to strategic measures for the $ws^{\infty}$ topology
under practical assumptions. Instead, we develop an approach based on the convergence of the sequence of invariant probability measures under quantized stationary policies.

Recall that $w_A$ denotes the expected average cost. Observe
that any deterministic stationary policy $\pi$, induced by $f$, defines a
stochastic kernel on $\sX$ given $\sX$ via
\begin{align}
Q_{\pi}(\,\cdot\,|x) \coloneqq \lambda^{\pi,x}_1(\,\cdot\,) = p(\,\cdot\,|x,f(x)).
\label{eq4}
\end{align}
Let us write $Q_{\pi}g(x) \coloneqq \lambda^{\pi,x}_1(g)$. If $Q_{\pi}$ admits an ergodic invariant probability measure $\nu_{\pi}$, then by Theorem
2.3.4 and Proposition 2.4.2 in \cite{HeLa03}, there exists an invariant set with
full $\nu_{\pi}$ measure such that for all $x$ in that set we have
\begin{align}
w_A(\pi,x)&=\limsup_{N\rightarrow\infty}\frac{1}{N} \sum_{n=0}^{N-1} \gamma^{\pi,\mu}_n(c) \nonumber \\
&=\lim_{N\rightarrow\infty}\frac{1}{N} \sum_{n=0}^{N-1} \lambda^{\pi,x}_n(c_{f}) = \nu_{\pi}(c_{f}).
\label{neweq15}
\end{align}
Let $\sM_{\pi}\in\B(\sX)$ be the set of all $x\in\sX$ such that convergence in
(\ref{neweq15}) holds. Hence, $\nu_{\pi}(\sM_{\pi})=1$ if $\nu_{\pi}$ exists. The following assumptions will be imposed in the main theorem of this section.
\begin{itemize}
\item [(e)] For any $\pi\in S$, $Q_{\pi}$ has a unique invariant probability measure $\nu_{\pi}$.
\item [(f1)] The set $\Gamma_{S}\coloneqq\{\nu\in\P(\sX): \nu Q_{\pi}=\nu \text{ for some } \pi\in S\}$ is relatively sequentially compact in the setwise topology.
\item [(f2)] There exists $x\in\sX$ such that for all $B\in\B(\sX)$, $\lambda^{\pi,x}_{n}(B)\rightarrow\nu_{\pi}(B)$ uniformly in $\pi\in S$.
\item [(g)] $\sM\coloneqq\bigcap_{\pi\in S}\sM_{\pi}\neq \emptyset$.
\end{itemize}

\begin{theorem}
Let the initial distribution $\mu$ be concentrated on some $x\in\sM$. Let $\pi \in S$ and $\{\pi^k\}$ be the quantized
approximations of $\pi$. Then, $w_A(\pi^k,\mu) \rightarrow w_A(\pi,\mu)$ under the assumptions (e), (f1) or (f2), and (g).
\label{thm4}
\end{theorem}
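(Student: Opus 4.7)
The plan is to reduce the convergence of $w_A(\pi^k,\mu)$ to setwise convergence of the invariant measures $\nu_{\pi^k} \to \nu_\pi$, and then to apply Theorem~\ref{cor1} to the integrals $\nu_{\pi^k}(c_{q_k})$. By assumption (g) and the choice $\mu = \delta_x$ for some $x \in \sM$, the identity (\ref{neweq15}) gives $w_A(\pi,\mu) = \nu_\pi(c_f)$ and $w_A(\pi^k,\mu) = \nu_{\pi^k}(c_{q_k})$, where $\pi$ and $\pi^k$ are induced by $f$ and $q_k$, respectively. Note also that $Q_{\pi^k}(B|y) = p(B|y,q_k(y))$ and $Q_\pi(B|y) = p(B|y,f(y))$.

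Under assumption (f1), I would proceed by a subsequence argument. Given any subsequence of $\{\nu_{\pi^k}\}$, relative sequential setwise compactness allows us to extract a further subsequence (not relabeled) converging setwise to some $\nu^* \in \P(\sX)$. To identify $\nu^*$, I would show that it is $Q_\pi$-invariant and then invoke the uniqueness statement in (e). For each $B \in \B(\sX)$, the invariance $\nu_{\pi^k}(B) = \nu_{\pi^k}\bigl(p(B|\cdot,q_k(\cdot))\bigr)$ holds for every $k$. The left side tends to $\nu^*(B)$, while on the right the integrand $y \mapsto p(B|y,q_k(y))$ converges pointwise to $p(B|y,f(y))$ by assumption (a) combined with the (uniform) convergence $q_k(y) \to f(y)$, and is bounded by $1$. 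Theorem~\ref{cor1} thus yields convergence of the right side to $\nu^* Q_\pi(B)$, forcing $\nu^* Q_\pi = \nu^*$, hence $\nu^* = \nu_\pi$ by (e). Since every subsequence admits a further subsequence with the same setwise limit, the full sequence satisfies $\nu_{\pi^k} \to \nu_\pi$ setwise.

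Under assumption (f2), I would instead use a direct triangle inequality: for each $n$ and each $B \in \B(\sX)$,
\[
|\nu_{\pi^k}(B) - \nu_\pi(B)| \le |\nu_{\pi^k}(B) - \lambda^{\pi^k,x}_n(B)| + |\lambda^{\pi^k,x}_n(B) - \lambda^{\pi,x}_n(B)| + |\lambda^{\pi,x}_n(B) - \nu_\pi(B)|.
\]
Given $\varepsilon > 0$, (f2) makes the first and third terms smaller than $\varepsilon/3$ uniformly in $k$ for $n$ sufficiently large. For this fixed $n$, Proposition~\ref{prop1} gives $P_\mu^{\pi^k} \to P_\mu^\pi$ in the $ws^\infty$ topology; since any bounded measurable function of $x_n$ alone belongs to $\C(\sH_n)$ (being trivially continuous in the action variables), the middle term tends to zero. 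Hence again $\nu_{\pi^k} \to \nu_\pi$ setwise.

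With setwise convergence of the invariant measures in hand, the final step is one more application of Theorem~\ref{cor1}: the integrands $c_{q_k}(y) = c(y,q_k(y))$ converge pointwise to $c_f(y)$ because $c(y,\cdot)$ is continuous by (c) and $q_k(y) \to f(y)$, and are uniformly bounded by the sup norm of $c$. This yields $\nu_{\pi^k}(c_{q_k}) \to \nu_\pi(c_f)$, which is the desired conclusion. I expect the main obstacle to be the identification step under (f1), which requires coupling the setwise continuity of $p(\,\cdot\,|x,a)$ in $a$ with the uniqueness of the invariant measure through the generalized dominated convergence theorem; under (f2) the argument is more direct but depends on extracting setwise convergence of time-$n$ marginals from Proposition~\ref{prop1}.
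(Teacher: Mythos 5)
Your proposal is correct and follows essentially the same route as the paper's proof: reduce to setwise convergence $\nu_{\pi^k}\to\nu_\pi$, establish it under (f1) via relative compactness plus identification of the limit as a $Q_\pi$-invariant measure using assumption (a) and Theorem~\ref{cor1} (the paper tests against general $g\in B(\sX)$ rather than indicators, which is equivalent), and under (f2) via the same three-term triangle inequality with Proposition~\ref{prop1} handling the middle term; the final passage to $\nu_{\pi^k}(c_{q_k})\to\nu_\pi(c_f)$ is likewise the same application of Theorem~\ref{cor1}. No gaps.
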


\begin{proof}
See Appendix~\ref{app2}.
\end{proof}

In the rest of this section we will derive conditions under which assumptions (e), (f1), (f2), and (g) hold.
To begin with, assumptions (e), (f2) and (g) are satisfied under any of the
conditions $Ri$, $i\in\{0,1,1(a),1(b),2,\ldots,6\}$ in
\cite{HeMoRo91}. Moreover, $\sM=\sX$ in (g) if at least one of the above
conditions holds. The next step is to find sufficient conditions for assumptions
(e), (f1) and (g) to hold.

Observe that the stochastic kernel $p$ on $\sX$ given $\sX\times\sA$ can be written as a measurable mapping from $\sX\times\sA$ to $\P(\sX)$ if $\P(\sX)$ is equipped with its Borel $\sigma$-algebra generated by the weak topology \cite{Bil99}, i.e.,
$p(\,\cdot\,|x,a):\sX\times\sA\rightarrow\P(\sX)$.
We impose the following assumption:
\begin{itemize}
\item [(e1)] $p(\,\cdot\,|x,a)\leq\zeta(\,\cdot\,)$ for all $x\in\sX$, $a\in\sA$ for some finite measure $\zeta$ on $\sX$.
\end{itemize}

\begin{proposition}
Suppose (e1) holds. Then, for any $\pi \in S$ induced by $f$, $Q_{\pi}$ has an invariant probability measure $\nu_{\pi}$. Furthermore, $\Gamma_S$ is sequentially relatively compact in the setwise topology. Hence, (e1) implies assumption (f1). In addition, if these invariant measures are unique, then assumptions (e) and (g) also hold with $\sM=\sX$ in (g).
\label{fact3}
\end{proposition}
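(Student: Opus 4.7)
The plan rests on a single observation: assumption (e1) forces every one-step kernel $Q_\pi(\,\cdot\,|x)$ to be absolutely continuous with respect to the finite reference measure $\zeta$, with Radon--Nikodym derivative uniformly bounded by $1$ in both $x\in\sX$ and $\pi\in S$. This brings the Dunford--Pettis theorem into play: since $\zeta$ is finite, any family of densities uniformly bounded in $L^{\infty}(\zeta)$ is uniformly integrable and hence sequentially weakly relatively compact in $L^{1}(\zeta)$, and weak convergence of densities tested against indicators $\mathbf{1}_A$ is precisely setwise convergence of the associated measures. This single compactness principle drives all three claims.

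For the existence of $\nu_\pi$ I would run a Krylov--Bogolyubov construction in the setwise topology. Fix $\pi\in S$ induced by $f\in\rF$ and some $x_0\in\sX$, and set $\nu_N \coloneqq \frac{1}{N}\sum_{n=0}^{N-1} Q_\pi^{n}(\,\cdot\,|x_0)$. Iterating (e1) gives $Q_\pi^{n}(\,\cdot\,|x_0)\leq\zeta$ for every $n\geq 1$, so the tail average $\frac{1}{N}\sum_{n=1}^{N-1} Q_\pi^{n}(\,\cdot\,|x_0)$ has a $\zeta$-density bounded by $1$, and Dunford--Pettis yields a subsequence converging setwise to some measure $\nu_\pi\leq\zeta$. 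The omitted $n=0$ contribution $\frac{1}{N}\delta_{x_0}$ vanishes in total variation, so $\nu_\pi$ is in fact a probability measure and the same subsequence of $\{\nu_N\}$ converges setwise to $\nu_\pi$. Invariance follows by passing to the setwise limit in the identity $\nu_N Q_\pi = \nu_N + \frac{1}{N}\bigl(Q_\pi^{N}(\,\cdot\,|x_0)-\delta_{x_0}\bigr)$: the right-hand side tends setwise to $\nu_\pi$ directly, while for each $A\in\B(\sX)$ the function $Q_\pi(A|\,\cdot\,)$ is bounded and measurable, so $\nu_{N_k} Q_\pi(A)=\nu_{N_k}\bigl(Q_\pi(A|\,\cdot\,)\bigr)\to\nu_\pi\bigl(Q_\pi(A|\,\cdot\,)\bigr)=\nu_\pi Q_\pi(A)$.

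Sequential relative compactness of $\Gamma_S$ is then immediate from the same ingredients: any $\nu\in\Gamma_S$ satisfies $\nu(A)=\int Q_\pi(A|x)\,\nu(dx)\leq\zeta(A)$ for the $\pi$ of which it is invariant, so the entire family $\Gamma_S$ consists of probability measures with $\zeta$-densities bounded by $1$, and Dunford--Pettis delivers setwise sequential compactness of $\Gamma_S$. This is assumption (f1).

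For the last assertion I would observe that uniqueness of invariant measures is (e) itself, and I would obtain $\sM=\sX$ by repeating the Krylov--Bogolyubov step from an \emph{arbitrary} initial state $x\in\sX$: the Cesaro averages $\frac{1}{N}\sum_{n=0}^{N-1}\lambda_n^{\pi,x}$ are setwise sequentially compact with every limit point invariant for $Q_\pi$, and uniqueness collapses all such limit points to $\nu_\pi$; hence the full sequence converges setwise to $\nu_\pi$. Tested against the bounded measurable function $c_f$ this yields $\frac{1}{N}\sum_{n=0}^{N-1}\lambda_n^{\pi,x}(c_f)\to\nu_\pi(c_f)$ for every $x$, so $\sM_\pi=\sX$ and $\sM=\sX$. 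The main care-point throughout is that passage to the setwise limit must commute with the operator $\nu\mapsto\nu Q_\pi$; the uniform $L^{\infty}(\zeta)$-bound supplied by (e1) does the real work, simultaneously producing the compactness and certifying that the relevant test functions $Q_\pi(A|\,\cdot\,)$ are admissible for setwise convergence.
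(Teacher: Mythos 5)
Your proof is correct and follows essentially the same route as the paper: a Krylov--Bogolyubov construction of $\nu_\pi$ from Ces\`aro averages dominated by $\zeta$, setwise sequential compactness of $\zeta$-dominated families (you invoke Dunford--Pettis where the paper cites \cite[Corollary 1.4.5]{HeLa03}, and you make the invariance step explicit via the telescoping identity where the paper cites \cite[Theorem 4.17]{Hai06}), and the uniqueness argument forcing full-sequence convergence to get $\sM=\sX$. Your separate treatment of the $\frac{1}{N}\delta_{x_0}$ term is a small but welcome refinement, since the paper's bare claim $Q^{(N)}_{\pi,x}\leq\zeta$ is not literally true for the $n=0$ summand.
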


\begin{proof}
For any $\pi \in S$, define $Q^{(N)}_{\pi,x}(\,\cdot\,)\coloneqq\frac{1}{N}\sum_{n=0}^{N-1}\lambda^{\pi,x}_{n}(\,\cdot\,)$ for some $x \in \sX$. Clearly, $Q^{(N)}_{\pi,x}\leq\zeta$ for all $N$. Hence, by \cite[Corollary 1.4.5]{HeLa03} there exists a subsequence $\{Q^{(N_{k})}_{\pi,x}\}$ which converges to some probability measure $\nu_{\pi}$ setwise. Following the same steps in \cite[Theorem 4.17]{Hai06} one can show that $\nu_{\pi}(g) = \nu_{\pi}(Q_{\pi}g)$,
for all $g \in B(\sX)$. Hence, $\nu_{\pi}$ is an invariant probability measure for $Q_{\pi}$.

Furthermore, assumption (e1) implies $\nu_{\pi} \leq \zeta$ for all $\nu_{\pi} \in \Gamma_s$. Thus, $\Gamma_s$ is relatively sequentially compact in the setwise topology  by again \cite[Corollary 1.4.5]{HeLa03}.

Finally, for any $\pi$, if the invariant measure $\nu_{\pi}$ is unique, then every setwise convergent subsequence of the relatively sequentially compact sequence $\{Q_{\pi,x}^{(N)}\}$ must converge to $\nu_{\pi}$. Hence, $Q_{\pi,x}^{(N)}\rightarrow\nu_{\pi}$ setwise which implies that $w_A(\pi,x)=\limsup_{N\rightarrow\infty} Q_{\pi,x}^{(N)}(c_{f})=\lim_{N\rightarrow\infty} Q_{\pi,x}^{(N)}(c_{f})=\nu_{\pi}(c_{f})$ for all $x\in\sX$ since $c_{f} \in B(\sX)$. Thus, $\sM=\sX$ in (g).
\end{proof}

\begin{example}
  Let us consider an additive-noise system in Example \ref{exm1} with the same assumptions.
  Furthermore, we assume $F$ is bounded. Observe that for any $\pi\in S$, if $Q_{\pi}$ has an
  invariant probability measure, then it has to be unique \cite[Lemma 2.2.3]{HeLa03}
  since there cannot exist disjoint invariant sets due to the
  positivity of $g$. Since this system satisfies (e1) and $R1(a)$
  in \cite{HeMoRo91} due to the boundedness of $F$, assumptions (e), (f1), (f2) and (g) hold with $\sM=\sX$.
  This means that Theorem~\ref{thm4} holds for an additive noise system under the above conditions.
\end{example}

\section{Rates of Convergence}
\label{sec3}

In this section we consider the problem \textbf{(P2)} for the discounted and average cost criteria.
Let $\|\,\cdot\,\|_{TV}$ \cite{HeLa03} denote the total variation distance between measures. We will impose a new set of assumptions in this section:
\begin{itemize}
\item [(h)] $\sA$ is infinite compact subset of $\R^{d}$ for some $d\geq1$.
\item [(j)] $c$ is bounded and $|c(x,\tilde{a})-c(x,a)|\leq K_{1} d_{\sA}(\tilde{a},a)$ for all $x$, and some $K_1\geq0$.
\item [(k)] $\|p(\,\cdot\,|x,\tilde{a})-p(\,\cdot\,|x,a)\|_{TV}\leq K_{2}d_{\sA}(\tilde{a},a)$ for all $x$, and some $K_2\geq0$.
\item [(l)] There exists positive constants $C$ and $\beta \in (0,1)$ such that for all $\pi \in S$, there is a (necessarily unique) probability measure $\nu_{\pi} \in \P(\sX)$ satisfying
    $\| \lambda^{\pi,x}_n - \nu_{\pi} \|_{TV} \leq C \kappa^n \text{  for all } x \in \sX \text{ and } n\geq1.$
\end{itemize}
Assumption (l) implies that for any policy $\pi \in S$, the stochastic kernel $Q_{\pi}$, defined in (\ref{eq4}), has a unique invariant probability measure $\nu_{\pi}$ and satisfies \emph{geometric ergodicity} \cite{HeLa99}. Note that (l) holds under any of the conditions $Ri$, $i\in\{0,1,1(a),1(b),2,\ldots,5\}$ in \cite{HeMoRo91}. Moreover, one can explicitly compute the constants $C$ and $\kappa$ for certain systems. For instance, consider an additive-noise system in Example \ref{exm1} with
Gaussian noise. Let $\sX=\R$. Assume $F$ has a bounded range so that
$F(\R)\subset [-L, L]$  for some $L>0$. Let $m$ denote the Lebesgue measure
on $\R$. Then, assumption (l) holds with $C=2$ and $\kappa=1-\varepsilon L$, where $\varepsilon=\frac{1}{\sigma\sqrt{2\pi}}\exp^{-(2L)^{2}/2\sigma^{2}}$.
For further conditions that imply (l) we refer the reader to \cite{HeMoRo91}, \cite{HeLa99}, \cite{MeTw94}.

Assumptions (h), (j) and (k) will be imposed for both cases, but (l) will only be assumed for the expected average cost. The following example gives the sufficient conditions for the additive noise system under which (j), (k) and (l) hold.

\begin{example}
  Consider the additive-noise system in Example \ref{exm1}. In addition
to the assumptions there, suppose $F(x,\,\cdot\,)$ is  Lipschitz
  uniformly in  $x\in\sX$ and the common density $g$ of the $v_n$ is Lipschitz on all
  compact subsets of $\sX$. Note that a Gaussian density has these properties. Let
  $c(x,a)\coloneqq \|x-a\|^2$.  Under these conditions,
  assumptions (j) and (k) hold for the additive noise system. If we further assume that $F$ is
  bounded, then assumption (l) holds as well.
\end{example}

The following result is a consequence of the fact that if $\sA$ is a
compact subset of $\R^d$ then there exist a constant $\alpha>0$ and finite
subsets $\Lambda_k\subset\sA$ with cardinality $|\Lambda_k| = k$ such that
$\max_{x\in\sA}\min_{y\in \Lambda_k} d_{\sA}(x,y)\leq \alpha (1/k)^{1/d}$ for all $k$,
where $d_{\sA}$ is the Euclidean distance on $\sA$ inherited from $\R^{d}$.

\begin{lemma}
  Let $\sA\subset\R^{d}$ be compact. Then for any measurable function
  $f:\sX\rightarrow\sA$ we can construct a sequence of quantizers $\{q_{k}\}$
  from $\sX$ to $\sA$ which
  satisfy  $\sup_{x\in\sX}
  d_{\sA}(q_{k}(x),f(x))\leq \alpha (1/k)^{1/d}$ for some constant $\alpha$.
\label{fact4}
\end{lemma}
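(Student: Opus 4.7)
The plan is to build $q_k$ by nearest-neighbor quantization of $f$ with respect to a suitably chosen $k$-point subset of $\sA$. First I would invoke the fact stated just before the lemma: since $\sA$ is a compact subset of $\R^d$, there exist a constant $\alpha>0$ and, for each $k\geq 1$, a finite set $\Lambda_k = \{a_1,\ldots,a_k\}\subset\sA$ of cardinality $k$ such that
\[
\max_{a\in\sA}\min_{i=1,\ldots,k} d_{\sA}(a,a_i)\leq \alpha (1/k)^{1/d}.
\]
(One standard way to obtain such a $\Lambda_k$ is to cover $\sA$, which lies in a Euclidean ball of some radius $R$, by a grid of cubes of side proportional to $(1/k)^{1/d}$ and to pick one representative from each cube's intersection with $\sA$; this directly yields the stated covering estimate with an explicit $\alpha$ depending on $R$ and $d$.)

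Next I would define the candidate quantizer
\[
q_k(x) \coloneqq \argmin_{a\in\Lambda_k} d_{\sA}(f(x),a),
\]
with ties broken by taking the representative of smallest index. Formally, setting
\[
B_i \coloneqq \Bigl\{x\in\sX : d_{\sA}(f(x),a_i)\leq d_{\sA}(f(x),a_j)\text{ for all }j\Bigr\},
\qquad i=1,\ldots,k,
\]
and $\tilde B_i \coloneqq B_i \setminus \bigcup_{j<i} B_j$, I would let $q_k(x)\coloneqq a_i$ for $x\in\tilde B_i$. Since $f$ is measurable and $a\mapsto d_{\sA}(a,a_i)$ is continuous, each $B_i$ is in $\B(\sX)$, hence so is each $\tilde B_i$; as $\{\tilde B_i\}_{i=1}^k$ partitions $\sX$, $q_k$ is a measurable map with finite range contained in $\Lambda_k$, i.e., $q_k\in\Q(\Lambda_k)$.

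Finally I would verify the uniform bound. By construction, for every $x\in\sX$,
\[
d_{\sA}(q_k(x),f(x)) = \min_{i=1,\ldots,k} d_{\sA}(f(x),a_i) \leq \max_{a\in\sA}\min_{i=1,\ldots,k} d_{\sA}(a,a_i) \leq \alpha (1/k)^{1/d},
\]
which gives the claim. The only non-routine point is arranging measurability of the argmin, and that is taken care of by the explicit tie-breaking through the measurable partition $\{\tilde B_i\}$; beyond this the argument is a direct application of the finite covering estimate for compact subsets of $\R^d$.
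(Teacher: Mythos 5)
Your proposal is correct and follows essentially the same route as the paper: the paper derives the lemma directly from the stated covering fact for compact subsets of $\R^{d}$ and uses exactly the nearest-neighbor quantizer $q_k(x)=\argmin_{a\in\Lambda_k}d_{\sA}(f(x),a)$ with measurable tie-breaking, as defined in (\ref{eq23}). Your explicit construction of the measurable partition $\{\tilde B_i\}$ simply fills in the routine measurability detail that the paper leaves implicit.
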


The following proposition is the key result in this section. It is proved in Appendix~\ref{app3}

\begin{proposition}
Let $\pi\in S$ and $\{\pi^{k}\}$ be the quantized approximations of $\pi$. For any initial distribution $\mu$ we have
\begin{align}
\|\lambda^{\pi,\mu}_n - \lambda^{\pi^k,\mu}_n\|_{TV}\leq \alpha K_{2}(2n-1)(1/k)^{1/d}
\label{eq6}
\end{align}
for all $n\geq1$ under assumptions (h), (j), and (k).
\label{prop8}
\end{proposition}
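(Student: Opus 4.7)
The plan is induction on $n$. Introduce the Markov kernels $P_\pi(\cdot|x) := p(\cdot|x, f(x))$ and $P_{\pi^k}(\cdot|x) := p(\cdot|x, q_k(x))$ on $\sX$, so that $\lambda_n^{\pi,\mu} = \mu P_\pi^n$ and $\lambda_n^{\pi^k,\mu} = \mu P_{\pi^k}^n$. The crucial quantitative input throughout is Lemma~\ref{fact4}, which guarantees $\sup_{x \in \sX} d_{\sA}(f(x), q_k(x)) \leq \alpha (1/k)^{1/d}$.

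For the base case $n = 1$, for any $B \in \B(\sX)$ one has $\lambda_1^{\pi,\mu}(B) - \lambda_1^{\pi^k,\mu}(B) = \int [p(B|x, f(x)) - p(B|x, q_k(x))]\,\mu(dx)$; bringing the absolute value inside the integral and combining assumption (k) with Lemma~\ref{fact4} yields $\|\lambda_1^{\pi,\mu} - \lambda_1^{\pi^k,\mu}\|_{TV} \leq K_2 \alpha (1/k)^{1/d}$, which matches the claim at $n = 1$. For the inductive step, Chapman--Kolmogorov combined with an add/subtract decomposition gives
\[
\lambda_{n+1}^{\pi,\mu}(B) - \lambda_{n+1}^{\pi^k,\mu}(B) = \int p(B|x, f(x))\,[\lambda_n^{\pi,\mu} - \lambda_n^{\pi^k,\mu}](dx) + \int [p(B|x, f(x)) - p(B|x, q_k(x))]\,\lambda_n^{\pi^k,\mu}(dx).
\]
The second summand is bounded in $\sup_B|\cdot|$ by $K_2 \alpha (1/k)^{1/d}$ via (k) and Lemma~\ref{fact4}. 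The first summand is an integral of the bounded function $p(B|\cdot, f(\cdot)) \in [0,1]$ against the zero-mass signed measure $\lambda_n^{\pi,\mu} - \lambda_n^{\pi^k,\mu}$, and by the duality between total variation and integration against bounded test functions is controlled by (a convention-dependent multiple of) $\|\lambda_n^{\pi,\mu} - \lambda_n^{\pi^k,\mu}\|_{TV}$. Invoking the inductive hypothesis and taking the supremum over $B$ closes the recursion.

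The main bookkeeping issue is getting exactly the coefficient $(2n - 1)$. The recursion $a_{n+1} \leq a_n + 2 K_2 \alpha (1/k)^{1/d}$ with $a_1 = K_2 \alpha (1/k)^{1/d}$ produces precisely $a_n = (2n - 1) K_2 \alpha (1/k)^{1/d}$; the extra factor of $2$ per step reflects the bound $|\int h\,d(\mu - \nu)| \leq 2 \|h\|_\infty \|\mu - \nu\|_{TV}$ that applies to the bounded function $p(B|\cdot, f(\cdot))$ and the zero-mass signed measure $\lambda_n^{\pi,\mu} - \lambda_n^{\pi^k,\mu}$ under the normalization $\|\mu - \nu\|_{TV} = \sup_B |\mu(B) - \nu(B)|$. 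An alternative, more transparent route avoiding the inductive bookkeeping is the telescoping identity $P_\pi^n - P_{\pi^k}^n = \sum_{j=0}^{n-1} P_\pi^{j}(P_\pi - P_{\pi^k})P_{\pi^k}^{n-1-j}$, which directly exhibits $n$ kernel-difference contributions each of TV-norm order $K_2 \alpha (1/k)^{1/d}$ (and gives the slightly sharper bound $n K_2 \alpha (1/k)^{1/d}$).
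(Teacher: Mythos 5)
Your argument follows essentially the same route as the paper's: induction on $n$, peeling off one transition via Chapman--Kolmogorov, adding and subtracting a mixed term, bounding the kernel-difference piece by assumption (k) together with Lemma~\ref{fact4}, and feeding the remaining piece into the inductive hypothesis. The only structural difference is that you peel off the \emph{last} transition ($\lambda^{\pi,\mu}_{n+1}=\lambda^{\pi,\mu}_n P_{\pi}$), whereas the paper peels off the \emph{first} ($\lambda^{\pi,\mu}_{n+1}(h)=\lambda^{\pi,\mu}_1(\lambda^{\pi,x_1}_n(h))$); the paper's variant is why it must observe that the inductive bound is uniform in the initial distribution, a point your variant sidesteps.

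Where your writeup wobbles is the accounting of the factor of $2$. You attribute it to the duality bound $|\int h\,d(\mu-\nu)|\le 2\|h\|_\infty\sup_B|\mu(B)-\nu(B)|$ applied to the \emph{first} summand, i.e., the one carrying the inductive hypothesis. If the $2$ really sat there, the recursion would read $a_{n+1}\le 2a_n+a_1$ and the induction would return an exponentially growing bound, not $(2n-1)a_1$. In fact, for $h=p(B|\,\cdot\,,f(\cdot))\in[0,1]$ and the zero-mass signed measure $\lambda^{\pi,\mu}_n-\lambda^{\pi^k,\mu}_n$, the Hahn decomposition gives the sharper constant $\sup h-\inf h\le 1$, so your first summand is bounded by $a_n$ itself and your second by $a_1$, yielding $a_{n+1}\le a_n+a_1$ and hence $nK_2\alpha(1/k)^{1/d}$ --- exactly what your closing telescoping identity delivers, and sharper than $(2n-1)$. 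The paper's $(2n-1)$ instead comes from its normalization $\|\mu-\nu\|_{TV}=2\sup_B|\mu(B)-\nu(B)|$ together with the cruder inequality $|\mu(h)-\eta(h)|\le\|\mu-\eta\|_{TV}\sup_x|h(x)|$ applied to the one-step-difference term, which produces $a_{n+1}\le a_n+2a_1$. Since every one of these recursions implies the stated bound $(2n-1)K_2\alpha(1/k)^{1/d}$, your conclusion is correct; just repair (or delete) the sentence explaining the origin of the $2$ --- the telescoping argument you append is the cleanest and sharpest version and could simply replace the induction.
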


\subsection{Expected Discounted Cost}
\label{sub1sec3}

The proof of the following theorem essentially follows from Proposition~\ref{prop8}. The proof is given in Appendix~\ref{app4}.

\begin{theorem}
  Let $\pi\in S$ and $\{\pi^{k}\}$ be the quantized approximations of $\pi$. For any initial distribution $\mu$, we have
\begin{align}
|w_{\beta}(\pi,\mu)-w_{\beta}(\pi^{k},\mu)|\leq K (1/k)^{1/d},
\label{eq12}
\end{align}
where $K=\frac{\alpha}{1-\beta}(K_{1}-\beta K_{2}M+\frac{2\beta MK_{2}}{1-\beta})$ with $M\coloneqq\sup_{(x,a)\in\sX\times\sA}|c(x,a)|$ under assumptions (h), (j) and (k).
\label{thm6}
\end{theorem}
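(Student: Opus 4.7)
The plan is to reduce the bound on $|w_\beta(\pi,\mu) - w_\beta(\pi^k,\mu)|$ to a per-stage bound and then sum a geometric-type series. Since $\pi \in S$ is induced by some $f \in \rF$ and $\pi^k$ is induced by a quantizer $q_k$ supplied by Lemma~\ref{fact4} satisfying $\sup_x d_{\sA}(f(x),q_k(x)) \le \alpha(1/k)^{1/d}$, I would first write
\begin{align*}
|w_\beta(\pi,\mu)-w_\beta(\pi^k,\mu)| \le \sum_{n=0}^\infty \beta^n \bigl|\lambda^{\pi,\mu}_n(c_f) - \lambda^{\pi^k,\mu}_n(c_{q_k})\bigr|,
\end{align*}
noting that under a deterministic stationary policy the joint law $\gamma^{\pi,\mu}_n$ is the pushforward of $\lambda^{\pi,\mu}_n$ under $x \mapsto (x,f(x))$, so $\gamma^{\pi,\mu}_n(c) = \lambda^{\pi,\mu}_n(c_f)$ and similarly for $\pi^k$.

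Next I would insert the intermediate quantity $\lambda^{\pi^k,\mu}_n(c_f)$ and apply the triangle inequality to split the $n$th term into a ``state-law'' piece $|\lambda^{\pi,\mu}_n(c_f) - \lambda^{\pi^k,\mu}_n(c_f)|$ and a ``cost-Lipschitz'' piece $|\lambda^{\pi^k,\mu}_n(c_f) - \lambda^{\pi^k,\mu}_n(c_{q_k})|$. The state-law piece is controlled by Proposition~\ref{prop8}: since $\|c_f\|_\infty \le M$,
\begin{align*}
\bigl|\lambda^{\pi,\mu}_n(c_f) - \lambda^{\pi^k,\mu}_n(c_f)\bigr| \le M \,\|\lambda^{\pi,\mu}_n - \lambda^{\pi^k,\mu}_n\|_{TV} \le M\alpha K_2(2n-1)(1/k)^{1/d}
\end{align*}
for $n \ge 1$, and vanishes for $n = 0$ because $\lambda^{\pi,\mu}_0 = \lambda^{\pi^k,\mu}_0 = \mu$. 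The cost-Lipschitz piece is handled by assumption (j) pointwise: $|c_f(x) - c_{q_k}(x)| \le K_1 d_{\sA}(f(x),q_k(x)) \le K_1 \alpha (1/k)^{1/d}$, which integrates to the same bound against any probability measure and in particular against $\lambda^{\pi^k,\mu}_n$.

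Finally I would sum the two contributions against the discount weights: the Lipschitz piece yields $\sum_{n=0}^\infty \beta^n K_1 \alpha (1/k)^{1/d} = \frac{\alpha K_1}{1-\beta}(1/k)^{1/d}$, and the state-law piece yields, via the identities $\sum_{n\ge1} \beta^n = \frac{\beta}{1-\beta}$ and $\sum_{n\ge1} n\beta^n = \frac{\beta}{(1-\beta)^2}$,
\begin{align*}
\sum_{n=1}^\infty \beta^n M\alpha K_2 (2n-1)(1/k)^{1/d} = \frac{\alpha \beta K_2 M}{1-\beta}\left(\frac{2}{1-\beta} - 1\right)(1/k)^{1/d}.
\end{align*}
Adding these reproduces exactly the constant $K = \frac{\alpha}{1-\beta}\bigl(K_1 - \beta K_2 M + \frac{2\beta M K_2}{1-\beta}\bigr)$ claimed in the theorem.

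There is no serious obstacle left once Proposition~\ref{prop8} is in hand; the only conceptual step is recognizing that Proposition~\ref{prop8} only bounds the marginal state laws, so one must absorb the mismatch between $f$ and $q_k$ at stage $n$ separately via the Lipschitz hypothesis (j). The only mild care needed is to treat the $n=0$ term, where the $(2n-1)$ factor would wrongly be negative, by noting that the first piece is identically zero there.
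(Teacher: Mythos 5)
Your proposal is correct and follows essentially the same argument as the paper's proof: a stage-by-stage triangle-inequality decomposition into a cost-Lipschitz piece controlled by assumption (j) together with Lemma~\ref{fact4}, and a state-law piece controlled by Proposition~\ref{prop8}, followed by summing the geometric and arithmetico-geometric series. The only (immaterial) difference is that you insert the intermediate term $\lambda^{\pi^k,\mu}_n(c_f)$ whereas the paper inserts $\lambda^{\pi,\mu}_n(c_{q_k})$; both yield the same bounds and the same constant $K$.
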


\subsection{Expected Average Cost Case}
\label{sub2sec3}

In this section, as in Section~\ref{sub2sec2} we approach the problem by writing the expected average cost as an integral of the one stage cost function with respect to an invariant probability measure for the induced stochastic kernel. This way we obtain a bound on the difference between the actual and the approximated costs. However, the bound for this case will depend both on the rate of the quantizer approximating the actual policy and an extra term which changes with the system parameters. We will show that this extra term goes to zero as $n\rightarrow\infty$.

Note that for any $\pi \in S$, induced by $f$, assumption (l) implies that $\nu_{\pi}$ is an unique invariant probability measure for $Q_{\pi}$ and that $w_A(\pi,x)=\nu_{\pi}(c_{f})$ for all $x$ when $c$ is as in the assumption (c). The following theorem basically follows from Proposition~\ref{prop8} and the assumption (l). It is proved in Appendix~\ref{app5}.

\begin{theorem}
Let $\pi\in S$ and $\{\pi^{k}\}$ be the quantized approximations of $\pi$. Under assumptions (h), (j), (k),
and (l), for any $x\in\sX$ we have
\begin{align}
|w_A(\pi,x)-w_A(\pi^{k},x)|\leq 2MC \kappa^{n}+K_n(1/k)^{1/d}
\label{eq16}
\end{align}
for all $n\geq 0$, where $K_n=\bigl((2n-1)K_{2}\alpha M+K_{1}\alpha\bigr)$ and $M\coloneqq \sup_{(x,a)\in\sX\times\sA}|c(x,a)|$.
\label{thm7}
\end{theorem}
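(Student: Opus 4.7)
The plan is to reduce the comparison of the two average costs to a comparison of $n$-step state distributions, by exploiting (l) to replace both invariant measures by $n$-step occupation measures, and then to invoke Proposition~\ref{prop8} to control that finite-horizon difference. Concretely, let $\pi\in S$ be induced by $f\in\rF$ and $\pi^{k}\in S\Q(\Lambda_k)$ by the quantizer $q_k$ supplied by Lemma~\ref{fact4}. By assumption (l), $Q_{\pi}$ and $Q_{\pi^k}$ each admit a unique invariant probability measure $\nu_{\pi}$ and $\nu_{\pi^{k}}$, and the discussion preceding the theorem gives $w_A(\pi,x)=\nu_{\pi}(c_{f})$ and $w_A(\pi^{k},x)=\nu_{\pi^{k}}(c_{q_k})$ for every $x\in\sX$.

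The first step is to insert $\lambda^{\pi,x}_{n}(c_{f})$ and $\lambda^{\pi^{k},x}_{n}(c_{q_k})$ and use the triangle inequality,
\begin{align*}
|w_A(\pi,x)-w_A(\pi^{k},x)|
&\le |\nu_{\pi}(c_{f})-\lambda^{\pi,x}_{n}(c_{f})| + |\lambda^{\pi,x}_{n}(c_{f})-\lambda^{\pi^{k},x}_{n}(c_{q_k})| \\
&\quad{}+ |\lambda^{\pi^{k},x}_{n}(c_{q_k})-\nu_{\pi^{k}}(c_{q_k})|.
\end{align*}
Since $\|c_{f}\|_{\infty},\|c_{q_k}\|_{\infty}\le M$, the first and third terms are each bounded by $M\,\|\lambda^{\cdot,x}_{n}-\nu_{\cdot}\|_{TV}\le MC\kappa^{n}$ by assumption (l), accounting for the $2MC\kappa^{n}$ contribution.

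For the middle term, the second step is to further decompose
\begin{align*}
|\lambda^{\pi,x}_{n}(c_{f})-\lambda^{\pi^{k},x}_{n}(c_{q_k})|
\le |\lambda^{\pi,x}_{n}(c_{f})-\lambda^{\pi^{k},x}_{n}(c_{f})| + |\lambda^{\pi^{k},x}_{n}(c_{f}-c_{q_k})|.
\end{align*}
The first piece is a bounded function $c_f$ (with $\|c_f\|_\infty\le M$) integrated against the difference of two state laws, so Proposition~\ref{prop8} yields the upper bound $M\alpha K_{2}(2n-1)(1/k)^{1/d}$. The second piece is controlled pointwise using the Lipschitz assumption (j) and the uniform quantizer error from Lemma~\ref{fact4}:
\begin{align*}
|c_{f}(y)-c_{q_k}(y)| = |c(y,f(y))-c(y,q_k(y))| \le K_{1}\,d_{\sA}(f(y),q_k(y)) \le K_{1}\alpha (1/k)^{1/d},
\end{align*}
uniformly in $y$, so integration against the probability measure $\lambda^{\pi^{k},x}_{n}$ preserves this bound. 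Adding the two contributions gives exactly $K_n(1/k)^{1/d}=\bigl((2n-1)K_{2}\alpha M+K_{1}\alpha\bigr)(1/k)^{1/d}$, which combined with the first step yields \eqref{eq16}.

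There is no real obstacle beyond setting up the right decomposition; the only subtlety is that $\pi$ and $\pi^{k}$ have \emph{different} invariant measures, so a direct comparison $\nu_{\pi}\leftrightarrow\nu_{\pi^{k}}$ is not available. Passing through the common horizon-$n$ distributions is what allows Proposition~\ref{prop8} to be applied, and the freedom to choose $n$ is precisely what produces the two-parameter bound $2MC\kappa^{n}+K_n(1/k)^{1/d}$ (which one can later optimize in $n$ for a given quantization rate).
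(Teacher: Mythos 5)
Your proof is correct and follows essentially the same route as the paper's: both arguments combine assumption (l), Proposition~\ref{prop8}, and assumption (j) with Lemma~\ref{fact4} via a triangle-inequality decomposition through the $n$-step laws $\lambda^{\pi,x}_{n}$ and $\lambda^{\pi^{k},x}_{n}$, and arrive at the identical bound term by term. The only (immaterial) difference is the order of insertion: the paper first splits off $|\nu_{\pi}(c_{f}-c_{q_k})|$ and then bounds $\|\nu_{\pi}-\nu_{\pi^{k}}\|_{TV}$ through the $n$-step distributions, whereas you insert the $n$-step occupation measures at the level of the cost functionals directly.
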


Observe that depending on the values of $C$ and $\kappa$, we can first make the first term in (\ref{eq16}) small enough by choosing sufficiently large $n$, and then for this $n$ we can choose $k$ large enough such that the second term  in (\ref{eq16}) is small.

\emph{Order Optimality:} The following example demonstrates that the order of approximation errors  in Theorems~\ref{thm7} and \ref{thm6} cannot be better than $O((\frac{1}{k})^{\frac{1}{d}})$. More precisely, we exhibit a simple standard example where we can lower bound the approximation errors for the optimal stationary policy by $L(1/k)^{1/d}$, for some positive constant $L$.

In what follows $h(\,\cdot\,)$ and $h(\,\cdot\,|\,\cdot\,)$ denote differential and conditional differential entropies, respectively \cite{CoTh06}.

\begin{example}\label{exm4}
Consider the linear system
\begin{align}
x_{n+1} = Ax_n + Ba_n + v_n, n=0,1,2,\ldots, \nonumber
\end{align}
where $\sX = \sA = \R^d$ and the $v_n$'s are i.i.d. random vectors whose common distribution has density $g$. For simplicity suppose that the initial distribution $\mu$ has the same density $g$. It is assumed that the differential entropy $h(g) \coloneqq -\int_{\sX} g(x) \log{g(x)} dx$ is finite. Let the one stage cost function be $c(x,a) \coloneqq \| x-a \|$. Clearly, the optimal stationary policy $\pi^{*}$ is induced by the identity $f(x) = x$, having the optimal cost $w_i(\pi,\mu) = 0$, where $i \in \{\beta,A\}$. Let $\{\pi^k\}$ be the quantized approximations of $\pi^{*}$. Fix any $k$ and define $D_n \coloneqq E_{\mu}^{\pi^k}\bigl[c(x_n,a_n)\bigr]$ for all $n$. Then, by the Shannon lower bound (SLB) \cite[p. 12]{YaTGr80} we have for $n\geq1$
\begin{align}
\log{k} \geq R(D_n) \geq h(x_n)+ \theta(D_n)
&= h(Ax_{n-1}+Ba_{n-1}+v_{n-1}) + \theta(D_n) \nonumber \\
&\geq h(Ax_{n-1}+Ba_{n-1}+v_{n-1}|x_{n-1},a_{n-1}) + \theta(D_n) \nonumber \\
&= h(v_{n-1}) + \theta(D_n), \label{eq25}
\end{align}
where $\theta(D_n) = - d + \log\biggl(\frac{1}{dV_d\Gamma(d)}\bigl(\frac{d}{D_n}\bigr)^d\biggr)$, $R(D_n)$ is the rate-distortion function of $x_n$, $V_d$ is the volume of the unit sphere $S_d = \{x: \|x\| \leq 1\}$, and $\Gamma$ is the gamma function. Here, (\ref{eq25}) follows from the independence of $v_{n-1}$ and the pair $(x_{n-1},a_{n-1})$. Note that $h(v_{n-1})=h(g)$ for all $n$. Hence, we obtain
$D_n \geq L (1/k)^{1/d}$, where $L \coloneqq \frac{d}{2} \bigl(\frac{2^{h(g)}}{d V_d \Gamma(d)}\bigr)^{1/d}$. This gives
$|w_{\beta}(\pi^{*},\mu) - w_{\beta}(\pi^k,\mu)| \geq \frac{L}{1-\beta} (1/k)^{1/d} \text{  and  }
|w_A(\pi^{*},\mu) - w_A(\pi^k,\mu)| \geq L (1/k)^{1/d}$.
\end{example}

\section{Approximation of Randomized Stationary Policies}
\label{sec4}

In this section, we extend results developed for the deterministic case
to randomized stationary policies. This extension is motivated by the facts that: (i) for a large
class of average cost optimization problems, it is not known whether one can
restrict the optimal policies to deterministic stationary policies, whereas
the optimality of possibly randomized stationary policies can be established
through the convex analytic method \cite{Man60,Bor02}, and (ii) randomized stationary policies
are necessary in constrained MDPs even for the discounted cost (see e.g. \cite{Piu97}).
Throughout this section we skip over all proofs
since these follow by applying same steps as in the proofs given in Section
\ref{sec2} and \ref{sec3}.

Throughout this section, we assume that conditions (a), (b), and (c) hold. Let $\pi\in
RS$ be induced by a stochastic kernel $\eta(da|x)$ on $\sA$ given $\sX$. By
Lemma 1.2 in \cite{GiSk79} there exists a measurable function
$\sf:\sX\times[0,1]\rightarrow\sA$ such that for any $E\in\B(\sA)$
\begin{align}
\eta(E|x)&=m\bigl(\{z: \sf(x,z)\in E\}\bigr), \nonumber
\end{align}
where $m$ is the Lebesgue measure on $[0,1]$. Equivalently, we can write $\eta(E|x)$ as
\begin{align}
\eta(E|x)&=\int_{[0,1]} \delta_{\sf(x,z)}(E) m(dz) . \label{neweq1}
\end{align}
Hence, $\pi$ can be represented as an (uncountable) convex combination of deterministic stationary policies parameterized by [0,1]. For each $z$, let $\{\sq_{k}(\,\cdot\,,z)\} \in \Q(\Lambda_k)$ denote the sequence of quantizers that uniformly converges to $\sf(\,\cdot\,,z)$ defined in Section~\ref{sec2}. Note that such quantizers can be constructed so that the resulting function $\sq_{k}(x,z)$ is measurable. Hence, $|\sq_k(\sX,z)|=|\Lambda_k|$ for all $z\in[0,1]$. Let $\{\pi^{k}\}$ be the sequence of randomized stationary policies induced by the stochastic kernels
\begin{align}
\eta_{k}(\,\cdot\,|x)\coloneqq\int_{[0,1]} \delta_{\sq_{k}(x,z)}(\,\cdot\,) m(dz).
\label{neweq2}
\end{align}

The following assumptions are versions of assumptions imposed in Sections~\ref{sec2} and \ref{sec3} adapted to randomized stationary policies. They will be imposed as needed throughout this section.
\begin{itemize}
\item [($\tilde{d}$)] $\sup_{\pi\in RS}\sum_{n=N+1}^{\infty}\int_{\sH_{\infty}} c_{n}(x_{n},a_{n}) P_{\mu}^{\pi}\rightarrow 0$ as $N\rightarrow\infty$.
\item [($\tilde{e}$)] For any $\pi\in RS$, $Q_{\pi}$ has a unique invariant probability measure $\nu_{\pi}$.
\item [($\tilde{f}1$)] The set $\Gamma_{RS}\coloneqq\{\nu\in\P(\sX): \nu Q_{\pi}=\nu \text{ for some } \pi\in RS\}$ is relatively sequentially compact in the setwise topology.
\item [($\tilde{f}2$)] There exists an $x\in\sX$ such that for all $B\in\B(\sX)$, $Q_{\pi}^{n}(B|x)\rightarrow\nu_{\pi}(B)$ uniformly in $\pi\in RS$.
\item [($\tilde{g}$)] $\sM\coloneqq\bigcap_{\pi\in RS}\sM_{\pi}\neq \emptyset$.
\item [($\tilde{l}$)] There exists a positive constant $C$ and $\kappa \in (0,1)$ such that for all $\pi \in RS$, there is a (necessarily unique) probability measure $\nu_{\pi} \in \P(\sX)$ satisfying
    \begin{align}
    \| \lambda^{\pi,x}_n - \nu_{\pi} \|_{TV} \leq C \kappa^n \text{  for all } x \in \sX \text{ and } n\geq1. \nonumber
    \end{align}
\end{itemize}

By adapting the proof of \cite[Lemma 3.3]{Her89} to randomized stationary policies, one can show that assumptions ($\tilde{e}$), ($\tilde{f2}$), and ($\tilde{g}$) are satisfied under any of the conditions $(i)$, $i\in\{1,2,\ldots,4\}$ in \cite[Section 3.3]{Her89}. Moreover, $\sM=\sX$ in (g) if at least one of the above conditions holds.
Furthermore, the statement in Proposition~\ref{fact3} remains true if we replace $S$ with $RS$. Hence, assumption (e1) implies ($\tilde{e}$), ($\tilde{f1}$) and ($\tilde{g}$) with $\sM = \sX$ in ($\tilde{g}$) if the invariant measures are unique.

\begin{example}
Let us again consider the additive-noise system of Example 1 with the same assumptions. Recall that boundedness of $F$ implies assumption (e1). On the other hand, it also implies condition $(2)$ in \cite[Section 3.3]{Her89}. Hence, if $F$ has a bounded range, then ($\tilde{e}$), ($\tilde{f}1$), ($\tilde{f}2$) and ($\tilde{g}$) with $\sM=\sX$ hold.
\end{example}

The first result in this section deals with problem \textbf{(P1)} for randomized policies. Recall that $w_t$, $w_{\beta}$ and $w_A$, respectively, denote the total, discounted, and average costs.

\begin{theorem}
Suppose assumptions (a), (b), (c) hold. Let $\pi \in RS$ and $\{\pi^k\}$ be the quantized approximations of $\pi$. Then,  $w_{\beta}(\pi^k,\mu) \rightarrow w_{\beta}(\pi,\mu)$ as $k\rightarrow\infty$. The same statement is true for $w_t$ if we further impose assumption ($\tilde{d}$).
Furthermore, if $\mu$ be concentrated on some $x \in \sM$, then $w_A(\pi^k,\mu) \rightarrow w_A(\pi,\mu)$ as $k\rightarrow\infty$ under the assumptions $(\tilde{e})$, $(\tilde{f}1)$ or $(\tilde{f}2)$, and $(\tilde{g})$.
\label{thm8}
\end{theorem}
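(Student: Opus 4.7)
The plan is to mirror the proofs of Theorems~\ref{thm3} and~\ref{thm4}, replacing deterministic stationary policies by their randomized analogues represented through (\ref{neweq1})--(\ref{neweq2}). The crucial technical step is a randomized version of Proposition~\ref{prop1}: for any $\pi \in RS$ and its quantized approximations $\{\pi^k\}$ induced by the kernels $\eta_k$ in (\ref{neweq2}), the strategic measures $P_\mu^{\pi^k}$ converge to $P_\mu^\pi$ in the $ws^{\infty}$ topology.

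First I would prove this randomized Proposition~\ref{prop1}. Fix $g \in \C(\sH_n)$; using (\ref{neweq2}) together with Fubini's theorem, rewrite
\begin{align*}
P_\mu^{\pi^k}(g) = \int_{[0,1]^n} \Psi_k(z_0,\ldots,z_{n-1})\, m^n(dz_0\cdots dz_{n-1}),
\end{align*}
where $\Psi_k(z)$ denotes the integral of $g$ against the law of the trajectory under the \emph{time-varying deterministic} policy whose stage-$i$ action is $\sq_k(x_i, z_i)$. For each fixed $z=(z_0,\ldots,z_{n-1})$, the functions $\sq_k(\,\cdot\,,z_i) \to \sf(\,\cdot\,,z_i)$ uniformly on $\sX$, so a time-varying extension of the setwise-continuity argument behind Proposition~\ref{prop1} (iterating Theorem~\ref{cor1} over the $n$ transitions via (a)) yields $\Psi_k(z) \to \Psi(z)$, the analogous integral under $\sf$. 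Since the integrands are uniformly bounded, bounded convergence propagates the limit to give $P_\mu^{\pi^k}(g) \to \int_{[0,1]^n} \Psi(z)\, m^n(dz) = P_\mu^{\pi}(g)$. Specializing to $g=c_n$, which lies in $\C(\sH_{n+1})$ by (c), yields $\gamma_n^{\pi^k,\mu}(c_n) \to \gamma_n^{\pi,\mu}(c_n)$.

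With this convergence in hand, the statements for $w_\beta$ and $w_t$ copy the proof of Theorem~\ref{thm3} verbatim, invoking assumption $(\tilde{d})$ in place of (d) for the total-cost tail estimate ($w_\beta$ again appearing as a special case because the discounted tail is uniformly controlled by (c)). For the average-cost statement, with $\mu=\delta_x$ for some $x\in\sM$, I would follow the proof of Theorem~\ref{thm4}, observing only that the induced stochastic kernel is now $Q_{\pi^k}(\,\cdot\,|x) = \int_{\sA} \eta_k(da|x)\, p(\,\cdot\,|x,a)$. Under $(\tilde{f}1)$ or $(\tilde{f}2)$ the Cesaro averages $Q_{\pi^k,x}^{(N)}$ form a setwise relatively compact family; any setwise cluster point $\nu$ of $\{\nu_{\pi^k}\}$ must satisfy $\nu Q_\pi = \nu$ (using (a) together with the uniform convergence of $\sq_k$ encoded in (\ref{neweq2})); and $(\tilde{e})$ then forces $\nu_{\pi^k} \to \nu_\pi$ setwise. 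Combined with $(\tilde{g})$ and a final application of Theorem~\ref{cor1} to the bounded integrand obtained from $c$ and $\eta_k$, this yields $w_A(\pi^k,\mu)\to w_A(\pi,\mu)$.

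The main obstacle is the randomized strategic-measure convergence in the second paragraph: one must verify that the joint measurability of $\sq_k$ provided by the construction in Section~\ref{sec4} makes the Fubini reduction rigorous, and then prove the time-varying deterministic extension of Proposition~\ref{prop1} by a clean induction on the horizon $n$ using Theorem~\ref{cor1} at each stage. Once these pieces are in place, the rest of the argument is a mechanical translation of the proofs of Theorems~\ref{thm3} and~\ref{thm4}.
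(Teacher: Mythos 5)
Your proposal is correct and follows exactly the route the paper intends: the paper omits the proof of Theorem~\ref{thm8}, stating only that it follows by the same steps as Theorems~\ref{thm3} and~\ref{thm4}, and your mixture representation via (\ref{neweq1})--(\ref{neweq2}), Fubini reduction to a time-varying deterministic analogue of Proposition~\ref{prop1}, and bounded convergence over $[0,1]^n$ is precisely the elaboration those ``same steps'' require. The only quibble is cosmetic: under ($\tilde{f}1$) the relevant relatively compact family is $\{\nu_{\pi^k}\}\subset\Gamma_{RS}$ itself rather than the Cesaro averages, but your cluster-point argument is otherwise the paper's Appendix~\ref{app2} argument verbatim.
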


The next result deals with problem \textbf{(P2)} in the randomized setting.

\begin{theorem}
Let $\pi\in RS$ and $\{\pi^{k}\}$ be the quantized approximations of $\pi$. Under assumptions (h), (j) and (k), for any initial distribution $\mu$ we have
\begin{align}
|w_{\beta}(\pi,\mu)-w_{\beta}(\pi^{k},\mu)|\leq (1/k)^{1/d} K, \nonumber
\end{align}
and on the other hand for all $x\in\sX$ and all $n\geq1$
\begin{align}
|w_A(\pi,x)-w_A(\pi^{k},x)|\leq 2MC\kappa^{n}+K_n(1/k)^{1/d} \nonumber
\end{align}
if we further assume ($\tilde{l}$). Here, $K$, $K_n$ ($n\geq1$) and $M$ are as in Theorems~\ref{thm6} and \ref{thm7}.
\label{thm10}
\end{theorem}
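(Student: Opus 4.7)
The plan is to extend the proofs of Theorems~\ref{thm6} and \ref{thm7} to randomized stationary policies by exploiting the Skorohod-type representation (\ref{neweq1})--(\ref{neweq2}), which writes $\eta$ and $\eta_k$ as integrals of Dirac kernels parameterized by $z\in[0,1]$ under the common measure $m$. The first step is to establish the randomized analogue of Proposition~\ref{prop8}, namely
\begin{align}
\|\lambda_n^{\pi,\mu} - \lambda_n^{\pi^k,\mu}\|_{TV} \leq \alpha K_2 (2n-1)(1/k)^{1/d}, \quad n\geq 1. \nonumber
\end{align}
The induced one-step kernels are $Q_\pi(\,\cdot\,|x) = \int_{[0,1]} p(\,\cdot\,|x,\sf(x,z))\,m(dz)$ and similarly for $Q_{\pi^k}$. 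Fubini, assumption (k), and the pointwise uniform bound $d_\sA(\sf(x,z),\sq_k(x,z))\leq \alpha(1/k)^{1/d}$ from Lemma~\ref{fact4} applied for each fixed $z$, give
\begin{align}
\|Q_\pi(\,\cdot\,|x) - Q_{\pi^k}(\,\cdot\,|x)\|_{TV} \leq \int_{[0,1]} K_2\, d_\sA(\sf(x,z),\sq_k(x,z))\,m(dz) \leq K_2\alpha (1/k)^{1/d}. \nonumber
\end{align}
Since the one-step TV bound is identical to the deterministic case, the inductive argument of Proposition~\ref{prop8} transfers verbatim.

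Given this estimate, both inequalities follow via the decompositions used in Theorems~\ref{thm6} and \ref{thm7}. Writing $c_\eta(x)\coloneqq \int_{[0,1]} c(x,\sf(x,z))\,m(dz)$ and $c_{\eta_k}(x)\coloneqq \int_{[0,1]} c(x,\sq_k(x,z))\,m(dz)$, I would split
\begin{align}
|\gamma_n^{\pi,\mu}(c) - \gamma_n^{\pi^k,\mu}(c)| &\leq \int_\sX \int_{[0,1]} |c(x,\sf(x,z)) - c(x,\sq_k(x,z))|\,m(dz)\,\lambda_n^{\pi,\mu}(dx) \nonumber \\
&\quad + M\|\lambda_n^{\pi,\mu} - \lambda_n^{\pi^k,\mu}\|_{TV}. \nonumber
\end{align}
Assumption (j) together with Lemma~\ref{fact4} bounds the first term by $K_1\alpha(1/k)^{1/d}$, while the second term is controlled by the randomized Proposition~\ref{prop8}. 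Multiplying by $\beta^n$ and summing over $n$ reproduces exactly the constant $K$ of Theorem~\ref{thm6}. For the average cost, I adapt the proof of Theorem~\ref{thm7}: by $(\tilde{l})$ we have $w_A(\pi,x)=\nu_\pi(c_\eta)$ and $w_A(\pi^k,x)=\nu_{\pi^k}(c_{\eta_k})$, and replacing the invariant measures by $\lambda_n^{\pi,x}$ and $\lambda_n^{\pi^k,x}$ costs at most $MC\kappa^n$ each in total variation; the remaining difference $|\lambda_n^{\pi,x}(c_\eta) - \lambda_n^{\pi^k,x}(c_{\eta_k})|$ is treated by the same decomposition, yielding the $K_n(1/k)^{1/d}$ term.

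The main obstacle is Step~1: one must verify that integrating the deterministic TV bound across $z\in[0,1]$ preserves both the constant $\alpha K_2$ and the $(2n-1)$ growth coming from the inductive argument behind Proposition~\ref{prop8}. Because the Skorohod representation makes $Q_\pi$ a convex combination of deterministic kernels parameterized by a measure-preserving coordinate, the usual triangle estimate $\|\mu Q_\pi - \nu Q_{\pi^k}\|_{TV}\leq \|\mu-\nu\|_{TV} + \sup_x\|Q_\pi(\,\cdot\,|x) - Q_{\pi^k}(\,\cdot\,|x)\|_{TV}$ applies without modification, and the pointwise-in-$z$ reduction avoids any need to handle the randomization explicitly. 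Once this step is in place, the remaining derivations are routine adaptations of the proofs of Theorems~\ref{thm6} and \ref{thm7}.
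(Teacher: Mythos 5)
Your proposal is correct and follows exactly the route the paper intends: the paper omits the proof of this theorem, stating only that it follows by the same steps as Theorems~\ref{thm6} and \ref{thm7}, and your argument fills in precisely those steps, using the representation (\ref{neweq1})--(\ref{neweq2}) to reduce the one-step total variation bound to the deterministic case via integration over $z$ (with the joint measurability of $\sq_k(x,z)$ already guaranteed in Section~\ref{sec4}) and then rerunning the induction of Proposition~\ref{prop8} and the cost decompositions with unchanged constants.
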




\section{Conclusion}\label{sec5}

In this paper, the problem of approximating deterministic stationary policies in MDPs was considered
for total, discounted, and average costs. We introduced deterministic stationary quantizer policies and
showed that any deterministic stationary policy can be approximated with an arbitrary precision by such policies. We also found upper bounds on the approximation errors in terms of the rates of the quantizers. These results were then extended to randomized stationary policies.

One direction for future work is to establish similar results for approximations
where the set of admissible quantizers has a certain structure, such as the set
of quantizers having convex codecells \cite{GyLi03}, which may give rise to
practical design methods. Moreover, if one can obtain further results on the structure of optimal
policies (e.g., by showing that an optimal policy satisfies a
Lipschitz property with a known bound on the constant),
the results in this paper may be directly applied to obtain approximation
bounds for quantized policies.  As a final remark, since setwise continuity assumption might be too restrictive in certain important cases, it is of interest to study a version of this problem where the setwise continuity assumption is replaced with the weak continuity in the state-action variables.

\section*{APPENDIX} \label{sec5}

\subsection{\textbf{Proof of Proposition \ref{prop1}}}
\label{app1}

We need to prove that $P_{\mu}^{\pi^{k}}(g) \rightarrow P_{\mu}^{\pi}(g)$ for any $g\in\bigcup_{n=0}^{\infty}\C(\sH_{n})$. Suppose $g\in\C(\sH_{n})$ for some $n$. Then we have $P_{\mu}^{\pi^{k}}(g) = \lambda^{\pi^k,\mu}_{(n)}(g_{q_k})$ and $P_{\mu}^{\pi}(g) = \lambda^{\pi,\mu}_{(n)}(g_{f})$. Note that both $g_f$ and $g_{q_k}$ ($k\geq1$) are uniformly bounded. Since $g$ is continuous in the $``a"$ terms by definition and $q_k$ converges to $f$, we have $g_{q_k} \rightarrow g_f$. Hence, by Theorem~\ref{cor1} it is enough to prove that $\lambda^{\pi^k,\mu}_{(n)}\rightarrow \lambda^{\pi,\mu}_{(n)}$ setwise as $k\rightarrow\infty$.

We will prove this by induction. Clearly,
$\lambda^{\pi^k,\mu}_{(1)}\rightarrow \lambda^{\pi,\mu}_{(1)}$
setwise by assumption (a). Assume the claim is true for some $n\geq 1$. For any $h\in B(\sX^{n+2})$ we can write
$\lambda^{\pi^k,\mu}_{(n+1)}(h)=\lambda^{\pi^k,\mu}_{(n)}\bigl(\lambda^{\pi^k,x_n}_{(1)}(h)\bigr)$ and
$\lambda^{\pi,\mu}_{(n+1)}(h)=\lambda^{\pi,\mu}_{(n)}\bigl(\lambda^{\pi,x_n}_{(1)}(h)\bigr)$.
Since $\lambda^{\pi^k,x_n}_{(1)}(h)\rightarrow \lambda^{\pi,x_n}_{(1)}(h)$ for all $(x_{0},\ldots,x_{n})\in\sX^{n+1}$ by assumption (a) and $\lambda^{\pi^k,\mu}_{(n)}\rightarrow\lambda^{\pi,\mu}_{(n)}$ setwise, we have $\lambda^{\pi^k,\mu}_{(n+1)}(h) \rightarrow \lambda^{\pi,\mu}_{(n+1)}(h)$ by by Theorem \ref{cor1} which completes the proof.

\subsection{\textbf{Proof of Theorem \ref{thm4}}}
\label{app2}

Let $Q_{\pi}$ and $Q_{\pi^{k}}$ be the stochastic kernels, respectively, for $\pi$
and $\{\pi^k\}$ defined in (\ref{eq4}). By assumption (e), $Q_{\pi}$ and $Q_{\pi^{k}}$ ($k\geq1$) have
unique, and so ergodic, invariant probability measures $\nu_{\pi}$ and $\nu_{\pi^{k}}$,
respectively. Since $x\in\sM$, we have $w_A(\pi^{k},\mu)=\nu_{\pi^{k}}(c_{q_k})$ and $w_A(\pi,\mu)=\nu_{\pi}(c_{f})$. Observe that
$c_{q_k}(x) \rightarrow c_{f}(x)$ for all
$x$ by assumption (c). Hence, if we prove $\nu_{\pi^k} \rightarrow \nu_{\pi}$ setwise, then
by Theorem \ref{cor1} we have $w_A(\pi^{k},\mu)\rightarrow w_A(\pi,\mu)$. We prove this first under (f1) and then under (f2).

\noindent \textit{I) Proof under assumption~(f1)}

We show that every setwise convergent subsequence $\{\nu_{\pi^{k_l}}\}$ of $\{\nu_{\pi^k}\}$ must
converge to $\nu_{\pi}$. Then, since $\Gamma_s$ is relatively sequentially compact in the setwise topology, there is
at least one setwise convergent subsequence $\{\nu_{\pi^{k_l}}\}$ of $\{\nu_{\pi^k}\}$, which implies the result.

Let $\nu_{\pi^{k_l}} \rightarrow \nu$ setwise for some $\nu \in \P(\sX)$. We will show that $\nu = \nu_{\pi}$ or equivalently
$\nu$ is an invariant probability measure of $Q_{\pi}$. For simplicity, we write  $\{\nu_{\pi^{l}}\}$ instead of $\{\nu_{\pi^{k_l}}\}$. Let $g\in B(\sX)$. Then by assumption (e) we have
\begin{align}
\nu_{\pi^{l}}(g)= \nu_{\pi^{l}}(Q_{\pi^{l}}g).\nonumber
\end{align}
Observe that by assumption (a), $Q_{\pi^{l}}g(x)\rightarrow Q_{\pi}g(x)$ for all $x$.
Since $Q_{\pi}g(x)$ and $Q_{\pi^{l}}g(x)$ ($l\geq1$) are uniformly
bounded and $\nu_{\pi^{l}}\rightarrow \nu$ setwise, we have $
\nu_{\pi^{l}}(Q_{\pi^{l}}g) \rightarrow \nu_{\pi}(Q_{\pi}g)$
by Theorem \ref{cor1}. On the other hand since $\nu_{\pi^{l}}\rightarrow \nu$ setwise we have
$\nu_{\pi^{l}}(g) \rightarrow \nu(g)$. Thus
$\nu(g)=\nu(Q_{\pi}g)$. Since $g$ is arbitrary, $\nu$ is an invariant probability measure for $Q_{\pi}$.

\noindent\textit{II) Proof under assumption~(f2)}

Observe that for all $x\in\sX$ and all $n$, $\lambda^{\pi^{k},x}_{n}\rightarrow \lambda^{\pi,x}_{n}$ setwise as $k\rightarrow\infty$ since $P_{x}^{\pi^{k}}\rightarrow P_{x}^{\pi}$ in the $ws^{\infty}$ topology (see Proposition \ref{prop1}). Let $B\in\B(\sX)$ be given and fix some $\varepsilon>0$. By assumption (f2) we can choose $N$ large enough such that $|\lambda^{\tilde{\pi},x}_{N}(B)-\nu_{\tilde{\pi}}(B)|<\varepsilon/3$ for all $\tilde{\pi}\in\{\pi,\pi^{1},\pi^{2},\cdots\}$. For this $N$, choose $K$ large enough such that $|\lambda^{\pi^{k},x}_{N}(B)-\lambda^{\pi,x}_{N}(B)|<\varepsilon/3$ for all $k\geq K$. Thus, for all $k\geq K$ we have
\begin{align}
  |\nu_{\pi^{k}}(B)-\nu_{\pi}(B)| \leq  |\nu_{\pi^{k}}(B)-\lambda^{\pi^{k},x}_{N}(B)|
  +|\lambda^{\pi^{k},x}_{N}(B)-\lambda^{\pi,x}_{N}(B)|
  +|\lambda^{\pi,x}_{N}(B)-\nu_{\pi}(B)|<\varepsilon.\nonumber
\end{align}
Since $\varepsilon$ is arbitrary, we obtain $\nu_{\pi^{k}}(B)\rightarrow\nu_{\pi}(B)$, which completes the proof.

\subsection{\textbf{Proof of Proposition \ref{prop8}}}
\label{app3}

We will prove this result by induction. Let $\mu$ be an arbitrary initial distribution and fix $k$. For $n=1$ the claim holds by the following argument:
\begin{align}
\|\lambda^{\pi,\mu}_1 - \lambda^{\pi^k,\mu}_1\|_{TV}
&= 2\sup_{B\in\B(\sX)} \bigl|\mu(\lambda^{\pi,x}_1(B)) - \mu(\lambda^{\pi^k,x}_1(B))\bigr|\nonumber \\
&\leq \mu\bigl(\|\lambda^{\pi,x}_1 - \lambda^{\pi^k,x}_1\|_{TV}\bigr) \nonumber \\
&\leq \mu\bigl(K_{2} d_{\sA}(f(x),q_{k}(x))\bigr) \text{ (by assumption (k))}\nonumber \\
&\leq\sup_{x\in\sX} K_{2} d_{\sA}(f(x),q_{k}(x))
\leq (1/k)^{1/d} K_{2} \alpha \text{ (by Lemma \ref{fact4}).}\nonumber
\end{align}
Observe that the bound $\alpha K_{2}(2n-1)(1/k)^{1/d}$ is independent of the choice of initial distribution $\mu$ for $n=1$. Assume the claim is true for $n\geq1$. Then we have
\begin{align}
  \|\lambda^{\pi,\mu}_{n+1} - \lambda^{\pi^k,\mu}_{n+1}\|_{TV}
  &=2\sup_{B\in\B(\sX)}\bigl|\lambda^{\pi,\mu}_1(\lambda^{\pi,x_1}_n(B)) -  \lambda^{\pi^k,\mu}_1(\lambda^{\pi^k,x_1}_n(B))\bigr| \nonumber \\
  &=2\sup_{B\in\B(\sX)}\bigl|\lambda^{\pi,\mu}_1(\lambda^{\pi,x_1}_n(B)) - \lambda^{\pi,\mu}_1(\lambda^{\pi^k,x_1}_n(B)) \nonumber \\
   &+ \lambda^{\pi,\mu}_1(\lambda^{\pi^k,x_1}_n(B)) - \lambda^{\pi^k,\mu}_1(\lambda^{\pi^k,x_1}_n(B))\bigr|\nonumber\\
  &\leq \lambda^{\pi,\mu}_1(\|\lambda^{\pi,x}_n - \lambda^{\pi^k,x}_n\|_{TV}) + 2 \|\lambda^{\pi,\mu}_1 - \lambda^{\pi^k,\mu}_1\|_{TV} \label{eq7}\\
  &\leq (1/k)^{1/d}(2n-1)K_{2}\alpha+2(1/k)^{1/d}K_{2}\alpha \label{eq8} \\
  &=\alpha K_{2}(2(n+1)-1)(1/k)^{1/d}\alpha. \nonumber
\end{align}
Here (\ref{eq7}) follows since
\begin{align}
|\mu(h) - \eta(h)|\leq \|\mu-\eta\|_{TV}\sup_{x\in\sX}|h(x)| \nonumber
\end{align}
and (\ref{eq8}) follows since the bound $\lambda K_{2}(2n-1)(1/k)^{1/d}$ is independent of the initial distribution.

\subsection{\textbf{Proof of Theorem \ref{thm6}}}
\label{app4}

For any fixed $k$ we have
\begin{align}
  |w_{\beta}(\pi)-w_{\beta}(\pi^{k})|
  &=\biggl|\sum_{n=0}^{\infty} \beta^{n} \lambda^{\pi,\mu}_n(c_f)
  -\sum_{n=0}^{\infty} \beta^{n} \lambda^{\pi^k,\mu}_n(c_{q_k})\biggr| \nonumber\\
  &\leq \sum_{n=0}^{\infty} \beta^{n}\bigl( \;| \lambda^{\pi,\mu}_n(c_f)-\lambda^{\pi,\mu}_n(c_{q_k})|
  +  | \lambda^{\pi,\mu}_n(c_{q_k})-\lambda^{\pi^k,\mu}_n(c_{q_k})| \; \bigr) \nonumber \\
  &\leq \sum_{n=0}^{\infty} \beta^{n} \bigl(\;
  \sup_{x_{n}\in\sX}|c_f-c_{q_k}|
  +\|\lambda^{\pi,\mu}_n-\lambda^{\pi,\mu}_n\|_{TV}M \; \bigr) \nonumber\\
  &\leq \sum_{n=0}^{\infty} \beta^{n} \biggl( \;\sup_{x_{n}\in\sX}
  d_{\sA}(f(x_n),q_k(x_n))K_{1}\biggr)  \nonumber \\
  &+\sum_{n=1}^{\infty} \beta^{n}
  \biggl( (1/k)^{1/d}(2n-1)K_{2}\alpha M\biggr)
\label{neweq16} \\
& \leq \sum_{n=0}^{\infty} \beta^{n} \biggl( (1/k)^{1/d}\alpha
K_{1}\biggr)
+\sum_{n=1}^{\infty} \beta^{n} \biggl( (1/k)^{1/d}(2n-1)K_{2}\alpha
M\biggr) \text{ (by Lemma \ref{fact4})} \nonumber \\
 &= (1/k)^{1/d} \alpha (K_{1}-\beta
 K_{2}M)\frac{1}{1-\beta}+(1/k)^{1/d}2K_{2}\alpha M\frac{\beta}{(1-\beta)^{2}}
 \nonumber \\
 &=(1/k)^{1/d}\frac{\alpha}{1-\beta}(K_{1}-\beta K_{2}M+\frac{2\beta MK_{2}}{1-\beta}).\nonumber
\end{align}
Here (\ref{neweq16}) follows from Assumption (j) and Proposition \ref{prop8}. This completes the proof.

\subsection{\textbf{Proof of Theorem \ref{thm7}}}
\label{app5}

For any $k$ and $x\in\sX$, we have
\begin{align}
|w_A(\pi,x)-w_A(\pi^{k},x)|
& = | \nu_{\pi}(c_f) - \nu_{\pi^{k}}(c_{q_k}) |  \nonumber \\
&\leq | \nu_{\pi}(c_f) - \nu_{\pi}(c_{q_k}) | + |\nu_{\pi}(c_{q_k}) - \nu_{\pi^k}(c_{q_k}) | \nonumber \\
&\leq \sup_{x\in\sX} |c_{f} - c_{q_k}|+\|\nu_{\pi}-\nu_{\pi^{k}}\|_{TV}\sup_{x\in\sX} |c_{q_k}|\nonumber \\
&\leq \sup_{x\in\sX} K_{1} d_{\sA}(f(x),q_{k}(x))+\|\nu_{\pi}-\nu_{\pi^{k}}\|_{TV} M \text{ (by assumption (j))}\nonumber \\
&\leq (1/k)^{1/d}K_{1}\alpha+\bigl(\|\nu_{\pi}-\lambda^{\pi,x}_{n}\|_{TV}+\|\lambda^{\pi,x}_{n}-\lambda^{\pi^k,x}_{n}\|_{TV}  +\|\lambda^{\pi^k,x}_{n}-\nu_{\pi^{k}}\|_{TV}\bigr)M  \nonumber \\
&\leq (1/k)^{1/d}K_{1}\alpha+\bigl(2C\kappa^{n}+(1/k)^{1/d}(2n-1)K_{2}\alpha\bigr)M \label{eq15}\\
&=2MC \kappa^{n}+\bigl((2n-1)K_{2}\alpha M+K_{1}\alpha\bigr)(1/k)^{1/d} \nonumber,
\end{align}
where (\ref{eq15}) follows from assumption (l) and Proposition \ref{prop8}.

\bibliographystyle{IEEEtran}


\end{document}